\keywords{Surfaces of general type, Abelian varieties, Elliptic curves}
\theoremstyle{plain}
\newtheorem{theorem}{Theorem}[subsection]
\newtheorem{lemma}[theorem]{Lemma}
\theoremstyle{definition}
\newtheorem{definition}[theorem]{Definition}
\newtheorem{remark}[theorem]{Remark}
\newcommand{\Z}{\mathbb{Z}}
\newcommand{\C}{\mathbb{C}}
\newcommand{\OC}{\mathcal{O}}
\newcommand{\PR}{\mathbb{P}}
\newcommand{\Cu}{\mathcal{C}} 
\newcommand{\Du}{\mathcal{D}} 
\newcommand{\Surf}{\mathcal{S}}  
\newcommand{\Jac}{\mathcal{J}} 
\newcommand{\GAU}{\mathit{G}} 
\newcommand{\GrpJ}{\mathcal{G}} 
\newcommand{\addl}{\oplus}  
\newcommand{\addlS}{\oplus_X} 
\newcommand{\addlSP}[1]{\addl^{#1}} 
\newcommand{\PUNKT}{\ \ \text{.}}  
\newcommand{\BEISTRICH}{\ \ \text{,}}  
\def\daschmapsto{\mathrel{\mapstochar\dashrightarrow}} 
\newcommand{\bigslant}[2]{{\raisebox{.2em}{$#1$}\left/\raisebox{-.2em}{$#2$}\right.}}  
\numberwithin{equation}{section}
\newcommand{\beba}  {\begin{equation}\begin{array}{rcl}}
\newcommand{\eaee}  {\end{array}\end{equation}}
\def\l@section{\@tocline{1}{0pt}{1pc}{}{}}
\def\l@subsection{\@tocline{2}{0pt}{1pc}{4.6em}{}}
\def\l@subsubsection{\@tocline{3}{0pt}{1pc}{7.6em}{}}
\renewcommand{\tocsection}[3]{%
  \indentlabel{\@ifnotempty{#2}{\makebox[2.3em][l]{%
    \ignorespaces#1 #2.\hfill}}}#3}
\renewcommand{\tocsubsection}[3]{%
  \indentlabel{\@ifnotempty{#2}{\hspace*{2.3em}\makebox[2.3em][l]{%
    \ignorespaces#1 #2.\hfill}}}#3}
\renewcommand{\tocsubsubsection}[3]{%
  \indentlabel{\@ifnotempty{#2}{\hspace*{4.6em}\makebox[3em][l]{%
    \ignorespaces#1 #2.\hfill}}}#3}
\title[Biquadratic addition laws and canonical map of $(1,2,2)$-Theta divisor]{Biquadratic addition laws on elliptic curves in $\PR^3$ and the canonical map of the $(1,2,2)$-Theta divisor}
\thanks{The present work was supported by the ERC Advanced grant n. 340258, TADMICANT.}
\author{Luca Cesarano}
\address{Luca Cesarano \newline Lehrstuhl Mathematik VIII, Universit\"at Bayreuth \newline
Universit\"atsstra\ss e 30, 95447, Bayreuth, Germany \newline
\texttt{luca.cesarano@uni-bayreuth.de}}
\begin{document}

\begin{abstract}
We recall that a smooth ample surface $\Surf$ in a general $(1,2,2)$-polarized abelian threefold, 
which is the pullback of the Theta divisor of a smooth plane quartic curve $\Du$, is a surface
isogenous to the product $\Cu \times \Cu$, where $\Cu$ is a genus $9$ curve embedded in $\PR^3$ 
as complete intersection of a smooth quadric and a smooth quartic. We show that the space of 
global holomorhic sections of the canonical bundle of this surface is generated by certain 
determinantal bihomogeneous polynomials of bidegree $(2,2)$ on $\PR^3$, which can be used to 
define biquadratic addition laws on the Jacobi model of elliptic curves, embedded in $\PR^3$ as 
complete intersection of two quadrics. Finally, we use this interesting relationship with the 
biquadratic addition laws to describe the behavior of the canonical map of $\Surf$.
\end{abstract}
\maketitle

\section{Introduction}
Let $(A, \mathcal{L})$ be a general $(1,2,2)$-polarized abelian threefold. We can consider an isogeny $p$ onto a principally polarized abelian threefold $(\Jac, \Theta)$, and we denote its kernel by $\GrpJ$, which is a group isomorphic to $\Z^2$ acting by translations on $A$. 
By our generality assumption on $A$, we can assume that $\Jac$ is the Jacobian variety of a non-hyperelliptic quartic plane $\Du$. Once identified $\Du$ with its embedded image in $\Jac$ through the Abel-Jacobi map, we can consider the pullback of $\Du$ through $p$, which we denote by $\Cu$. The curve $\Cu$ is a smooth projective curve of genus $9$ with an unramified bidouble cover $\pi:=p|_{\Cu}$ onto $\Du$.
It is well-known (see for instance \cite{ACGH} p. 226), that the Theta divisor $\Theta$ of $\Du$ is a translated with the vector of Riemann constants theta-characteristic of the subvariety 
\begin{equation*}
 W_2(\Du) = \{\mathcal{L} \in Pic^2(\Du) \ : h^0(\Du, \mathcal{L}) \geq 1 \} \PUNKT
\end{equation*}
According to Riemann's Singularity Theorem (cf. \cite{ACGH} p.226), $W_2(\Du)$ is singular precisely when $\Du$ is hyperelliptic. Hence, by our generality
assumption on $A$ and on $\Du$, we can identify $W_2(\Du)$ with the second symmetric product $\Du^{(2)}$, the latter defined as the quotient of the product $\Du \times \Du$ by the natural involution on the two factors. 
In particular, $\Du^{(2)}$ is a smooth surface which we regard as the set of effective divisors of degree $2$ on $\Du$. \newline

In this paper, we are interested in the problem of a purely geometrical description of the canonical map of the surface $\Surf$ obtained by pulling back the Theta divisor $\Theta$ to $A$ through $p$. From the definition of $\Surf$, it follows that $\Surf$ is a bidouble unramified cover, which we can geometrically describe as a quotient
 

\begin{equation} \label{RepresentS}
\Surf = \bigslant{\Cu \times \Cu}{\Delta_{\GrpJ} \times \Z_2} \BEISTRICH
\end{equation}
where 
 $\Delta_{\GrpJ}$ denotes the diagonal subgroup of $\GrpJ \times \GrpJ$, acting naturally on the factors of the product $\Cu \times \Cu$, and where $\Z_2$ acts naturally on the two factors. \newline

In the previous work \cite{Cesarano2018}, we studied the canonical map of $\Surf$ by using projection methods: the isogeny $p$ factors through each of the three projections onto the three $(1,1,2)$-polarized abelian threefold, each of them obtained as a quotient of $A$ by a non-trivial element of $\GrpJ$. Therefore, it is possible to investigate the behavior of canonical map of $\Surf$ by looking closely at the canonical map of the corresponding quotients, which are surfaces of type $(1,1,2)$. The canonical map is of degree $2$ and factors through a regular surface with $32$ nodes, with $p_g=4$ and $K^2 = 6$ (see \cite{CataneseSchreyer1}).\newline
In this paper, we aim to study the canonical map of $\Surf$ by using only the presentation \ref{RepresentS} of $\Surf$, which is, in particular, a surface isogenous to the product $\Cu^2$. We recall that surfaces isogenous to a product are the quotient of the form $\bigslant{C_1 \times C_2}{G}$, where $C_1$ and $C_2$ are smooth projective curves and $G$ is a finite group acting freely of $C_1 \times C_2$. However, a satisfactory geometrical description of the canonical map of such surfaces has turned out to be in general a very challenging task (see \cite{Catanese2018}), which we leave aside in the hope to be able to address some aspects of this question in future. In our concrete case, it turned out that there exists a relationship between the canonical map of the previously defined surface $\Surf$  and some bihomogeneous polynomials of bidegree $(2,2)$ in four variables which define addiction laws on certain elliptic curves in $\PR_3$. To introduce and describe more precisely this relationship, we start with the following lemma, which provides to us a very useful representation of the curve $\Cu$ and the unramified bidouble cover $p \colon \Cu \longrightarrow \Du$ (cf. \cite{Cesarano2018}, lemma 2.6).
\begin{lemma} \label{fundamentallemmagenus9}
Let $(A, \mathcal{L})$ be a general $(1,2,2)$-polarized Abelian 3-fold, let $p: A \longrightarrow \Jac$ be an isogeny onto the
Jacobian of a general algebraic curve $\Du$ of genus $3$.
Let us moreover consider the algebraic curve $\Cu$ obtained by pulling back to $A$
the curve $\Du$, according to the following diagram: 
\begin{equation} \label{pullbackdiag12}
\xymatrix { \Cu \ar@{^{(}->}[r]\ar[d]_{}& A\ar[d]_{} \\
\Du \ar@{^{(}->}[r]& \Jac }
\end{equation}
Then, the following hold true:
\begin{itemize}
\item The genus $9$ curve $\Cu$ admits $\mathcal{E}$ and $\mathcal{F}$ two distinct $\GrpJ$-invariant $g_{4}^1$'s, with 
$\mathcal{E}^{2} \ncong \mathcal{F}^{2}$ and
\begin{equation*}
 h^0(\Cu, \mathcal{E}) = h^0(\Cu, \mathcal{F}) = 2 \PUNKT
\end{equation*}
\item The line bundle $\mathcal{M} := \mathcal{E} \otimes \mathcal{F}$ is a very ample theta characteristic of type $g_8^3$.
\item The image of $\Cu$ in $\PR^3 = \PR(\mathcal{M})$ is a complete intersection of the following type:
\begin{equation} \label{corollaryscrolleq}
\Cu : \begin{cases}
X^2 + Y^2 + Z^2 + T^2 = 0 \\
q(X^2,Y^2,Z^2,T^2) = XYZT \BEISTRICH
\end{cases}
\end{equation}
where $q$ is a quadric, and there exist coordinates $[X,Y,Z,T]$ on $\PR^3$ and two generators $a$, $b$ of $\GrpJ$ 
such that the projective representation of $\GrpJ$ on $\PR^3$ is represented by
\begin{align}\label{actionofgenerators}
\begin{split}
a.[X,Y,Z,T] &= [X,Y,-Z,-T]\\
b.[X,Y,Z,T] &= [X,-Y,Z,-T] \PUNKT
\end{split}
\end{align} 
\item The unramified covering $p: \Cu \longrightarrow \Du$ can be expressed as the map obtained by restricting to 
$\Cu$ the rational map $\psi: \PR^3 \dashrightarrow \PR^3$ defined by
\begin{equation} \label{corollarycover}
\psi: [X,Y,Z,T] \daschmapsto [x,y,z,t] := [X^2,Y^2,Z^2,T^2]  \PUNKT
\end{equation}
and the equations of $\Du$ in $\PR^3 = \PR[x,y,z,t]$ are, according with \ref{corollaryscrolleq}, in the following form:
\begin{equation} \label{corollaryscrolleq}
\Du : \begin{cases}
x + y + z + t = 0 \\
q(x,y,z,t)^2 = xyzt \PUNKT
\end{cases}
\end{equation}
\end{itemize}
\end{lemma}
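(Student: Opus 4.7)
The plan is to exploit the étale bidouble cover structure $\pi \colon \Cu \to \Du$ by decomposing $\pi_*\OC_\Cu$ into the characters of $\GrpJ$, and then to construct all required line bundles and equations compatibly with the resulting $\GrpJ$-action. Riemann--Hurwitz applied to $\pi$ immediately gives $g(\Cu) = 9$, and the standard theory of $(\Z/2)^2$-Galois étale covers yields
\[
\pi_*\OC_\Cu = \OC_\Du \oplus \eta_1 \oplus \eta_2 \oplus \eta_3,
\]
where $\eta_1, \eta_2, \eta_3$ are the non-trivial characters of $\GrpJ$, identified with the order-$2$ elements of $\Pic^0(\Du)$ defining the cover. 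Since $\pi$ is étale, $K_\Cu = \pi^* K_\Du$, so any theta characteristic on $\Du$ pulls back to one on $\Cu$, and the projection formula gives
\[
h^0(\Cu, \pi^*L) = \sum_{i=0}^{3} h^0(\Du, L \otimes \eta_i), \qquad \eta_0 := \OC_\Du.
\]

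Next I would construct $\mathcal{M}$, $\mathcal{E}$ and $\mathcal{F}$. The generality hypothesis on $(A, \mathcal{L})$ guarantees the existence of an odd theta characteristic $\theta_0$ on $\Du$ whose full $\GrpJ$-orbit $\{\theta_0 \otimes \eta_i\}_{i=0}^3$ consists of odd theta characteristics, equivalently, a $\GrpJ$-orbit of bitangents of the plane quartic $\Du$. Setting $\mathcal{M} := \pi^*\theta_0$, the projection formula then gives $h^0(\Cu, \mathcal{M}) = 4$, so $\mathcal{M}$ is a theta characteristic of type $g_8^3$. The natural $\GrpJ$-action on $H^0(\Cu, \mathcal{M})$ splits the space into four one-dimensional character lines. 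Grouping these four characters into two pairs according to the kernel of one generator $a$ of $\GrpJ$ picks out a $\GrpJ$-invariant two-dimensional subspace, which is the complete linear system of a $\GrpJ$-invariant $g_4^1$; this gives $\mathcal{E}$, and the other generator $b$ analogously produces $\mathcal{F}$. By construction $\mathcal{E} \otimes \mathcal{F} \cong \mathcal{M}$, and genericity rules out the degenerate equality $\mathcal{E}^2 \cong \mathcal{F}^2$.

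For the projective embedding and the equations, the character decomposition of $H^0(\Cu, \mathcal{M})$ provides coordinates $[X, Y, Z, T]$ on $\PR^3$ in which the $\GrpJ$-action is diagonal exactly as in (\ref{actionofgenerators}). Very ampleness of $\mathcal{M}$ follows from standard separation arguments using the two distinct base-point-free pencils $|\mathcal{E}|$ and $|\mathcal{F}|$. A Riemann--Roch count combined with $\GrpJ$-equivariance shows that $\Cu$ lies on a unique $\GrpJ$-invariant quadric and, modulo this quadric, on a $\GrpJ$-invariant quartic; after rescaling the coordinates these take the normal forms $X^2 + Y^2 + Z^2 + T^2$ and $q(X^2, Y^2, Z^2, T^2) - XYZT$. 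The intersection of the two hypersurfaces has degree $8$ and arithmetic genus $9$, matching $\Cu$, so it coincides with $\Cu$ scheme-theoretically. Finally, the low-degree $\GrpJ$-invariants in $\C[X,Y,Z,T]$ are generated by $X^2, Y^2, Z^2, T^2$, so the rational map $\psi \colon [X:Y:Z:T] \dashrightarrow [X^2:Y^2:Z^2:T^2]$ realizes the quotient $\PR^3 \dashrightarrow \PR^3/\GrpJ$; restricting $\psi$ to $\Cu$ recovers $\pi$, and direct substitution gives the stated equations for $\Du$.

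The hard part is the existence step in the second paragraph: producing a theta characteristic $\theta_0$ on $\Du$ whose $\GrpJ$-orbit consists entirely of odd theta characteristics. This is a combinatorial claim about the action of a generic rank-$2$ subgroup of $J(\Du)[2]$ on the $28$ bitangents of $\Du$, which must be settled through the Weil pairing and the Arf invariant of the natural quadratic form whose zero locus is the set of even theta characteristics, using the generality assumption on $\GrpJ$ to exclude degenerate orbit configurations.
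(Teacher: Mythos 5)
Your overall strategy (decompose $\pi_*\OC_{\Cu}$ by characters, set $\mathcal{M}:=\pi^*\theta_0$ for a suitable odd theta characteristic $\theta_0$ of the plane quartic $\Du$, and read off the diagonal $\GrpJ$-action and the invariant equations from the character decomposition of $H^0(\Cu,\mathcal{M})$) is sound, and you correctly isolate the key arithmetic input, namely a full $\hat\GrpJ$-orbit of odd theta characteristics. But the step in which you produce the two $g^1_4$'s is wrong as stated. A two-dimensional character subspace of $H^0(\Cu,\mathcal{M})$, say $\langle X,Y\rangle$ with $X\in H^0(\Du,\theta_0)$ and $Y\in H^0(\Du,\theta_0\otimes\eta)$ pulled back, is a pencil of \emph{degree-$8$} divisors; its base locus is $\pi^{-1}$ of the common zeros of the two bitangent contact divisors, which is empty for general $\Du$. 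So $\langle X,Y\rangle$ is a base-point-free $g^1_8$ (in fact the pullback of a $g^1_4$ from the intermediate genus-$5$ quotient $\Cu/\langle a\rangle$), not the complete linear system of a $g^1_4$ on $\Cu$. Note also that a $\GrpJ$-invariant $g^1_4$ cannot be $\pi^*$ of a degree-$1$ bundle (that would have $h^0\le 1$), so no character-theoretic shortcut of this kind can work. The actual source of $\mathcal{E}$ and $\mathcal{F}$ is the unique quadric through $\phi_{\mathcal{M}}(\Cu)$: one must show it is $\GrpJ$-invariant of rank $4$ (i.e.\ all four coefficients of $aX^2+bY^2+cZ^2+dT^2$ are nonzero, which you never verify and which is exactly what allows the normalization to $X^2+Y^2+Z^2+T^2$), and then $\mathcal{E},\mathcal{F}$ are its two rulings, with $\mathcal{M}=\OC_Q(1,1)|_{\Cu}$. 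This also repairs your very-ampleness argument, which as written leans on the not-yet-existing pencils. Similarly, the coefficient of $XYZT$ in the invariant quartic must be shown nonzero (otherwise $\Cu$ would be invariant under all sign changes and could not be a connected $\GrpJ$-cover of $\Du$); you do not address this.

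Two further points. First, the existence of $\theta_0$ with all of $\theta_0\otimes\eta_i$ odd is not a genericity statement about $\GrpJ$: what is needed is that the subgroup $\{0,\eta_1,\eta_2,\eta_3\}\subset J(\Du)[2]$ defining the cover is isotropic for the Weil pairing, and this is forced by the polarization type being $(1,2,2)$ rather than $(1,1,4)$. Granting isotropy, the quadratic-form relation $q(\kappa+\eta+\eta')=q(\kappa+\eta)+q(\kappa+\eta')+q(\kappa)+e(\eta,\eta')$ shows that once three members of an orbit are odd so is the fourth, and a short count in a symplectic basis of $\mathbb{F}_2^6$ gives exactly four such $\theta_0$, forming a single orbit (a syzygetic tetrad of bitangents); so this step is fillable, but you have left it open and attributed it to the wrong hypothesis. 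Second, $\psi:[X,Y,Z,T]\daschmapsto[X^2,Y^2,Z^2,T^2]$ is generically $8:1$, i.e.\ the quotient of $\PR^3$ by the full group of sign changes modulo scalars, not by $\GrpJ$; it restricts to the degree-$4$ quotient $\pi$ on $\Cu$ only because the sign changes outside $\GrpJ$ reverse the sign of $XYZT$ and hence move $\Cu$ to the curve $q(X^2,Y^2,Z^2,T^2)=-XYZT$. Since the paper itself only cites \cite{Cesarano2018} for this lemma, I cannot match your route against a proof printed here, but as it stands your argument has a genuine gap at the construction of $\mathcal{E}$ and $\mathcal{F}$ and at the smoothness/normalization of the invariant quadric.
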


We represent the points of $\mathcal{S}$ by equivalence classes $[(P,Q)]$ of points of $\Cu \times \Cu$ in $\PR^3 \times \PR^3$ with coordinates $[X_0 \cdots T_0]$ and $[X_1, \cdots T_1]$ on the two factors. Since the natural action of $\GrpJ$ by translations in $A$ is equivalent to the action of $\GrpJ$ on the cosets of the diagonal subgroup in $\GrpJ^2$, the action of $\GrpJ$ on $\Surf$ can be naturally represented as the action of $\GrpJ$ on the second component of $\Cu^2$: if $g$ denotes an element of $\GrpJ$ and $[(P,Q)]$ a point on $\Surf$, we have 
\noindent 
\begin{equation*}  
g.[P,Q] =[P,g.Q]  \PUNKT
\end{equation*}


\noindent We can now easily exhibit a basis for $H^0(\Surf, \omega_{\Surf})$. By lemma \ref{fundamentallemmagenus9}, $\omega_{\Cu}$ is the restriction of $\OC_{\PR}(2)$ on $\Cu$ and $H^0(\Cu, \omega_{\Cu})$ splits into a direct sum of $\GrpJ$ invariant subspaces 
\begin{align*}
H^0(\Cu, \omega_{\Cu}) = W_{++} \oplus W_{+-} \oplus W_{-+} 
\oplus W_{--} \BEISTRICH
\end{align*}
according to the signs of the action of two fixed generators $a$ and $b$ of $\GrpJ$ on the coordinates of $\PR^3$ (see \ref{actionofgenerators}).
\begin{align*}
W_{++} &= \left< X^2|_{\Cu},  Y^2|_{\Cu},  Z^2|_{\Cu} \right> \\
W_{+-} &= \left<XY|_{\Cu},  ZT|_{\Cu} \right> \\
W_{-+} &= \left<XZ|_{\Cu},  YT|_{\Cu} \right> \\
W_{--} &= \left<XT|_{\Cu},  YZ|_{\Cu} \right> 
\end{align*}
Recall that $\Surf$ is defines as the quotient of $\Cu \times \Cu$ by the action of $\Delta_G \times \Z_2$. Hence, 
\begin{equation*}
H^0(\Surf, \omega_{\Surf}) = H^0(\Cu \times \Cu, \omega_{\Cu} \boxtimes \omega_{\Cu} )^{\Delta_G \times \Z_2}
\end{equation*}
where the latter vector space denotes the $\Delta_G \times \Z_2$-invariant global holomorphic sections of $\omega_{\Cu} \boxtimes \omega_{\Cu}$. 

\noindent In conclusion, the following is a basis for $H^0(\Surf, \omega_{\Surf})$:
 \begin{equation} \label{polynomialexpressioncanonicalmap}
 \begin{split}
\eta_{12} &:= \begin{vmatrix} 					X_0^2	&	X_1^2\\
                         Y_0^2	 &	 Y_1^2
 					 \end{vmatrix} \\
\eta_{13} &:=  \begin{vmatrix} 					X_0^2	&	X_1^2\\
  																Z_0^2	&	Z_1^2
 					 \end{vmatrix} \\		
\eta_{23} &:=  \begin{vmatrix} 					Y_0^2	&	Y_1^2\\
  																Z_0^2	&	Z_1^2
 					 \end{vmatrix} \\		 			 
\omega_{45} &:= 	
					\begin{vmatrix} 									X_0Y_0	&	X_1Y_1\\
  																Z_0T_0	&	 Z_1T_1\\
 					 \end{vmatrix} \\		
\omega_{67}  &:=
					 \begin{vmatrix} 								X_0Z_0	&	X_1Z_1\\
  																Y_0T_0	&	 Y_1T_1\\
 					 \end{vmatrix} \\			 
\omega_{89} &:=  
					\begin{vmatrix} 									X_0T_0	&	X_1T_1\\
  																Y_0Z_0	&	 Y_1Z_1\\
 					 \end{vmatrix}  \PUNKT
\end{split}
\end{equation}
These determinantal polynomials of bidegree $(2,2)$ turn out to be strictly related to some biquadratic addition laws on the elliptic curves in $\PR^3$ defined as follows: we fix two non-zero complex numbers $u$ and $v$ such that $u+v$ is also non-zero. Then the following complete intersection is a smooth elliptic curve in $\PR^3$: 
\begin{align} \label{EquationsJacobiINTRO}
\mathcal{J}_{u,v}: \begin{cases}
uX^2 + Y^2 &= Z^2\\
vX^2 + Z^2 &= T^2 
\end{cases} 
\end{align}
One can show that (cf. \cite{AdditionLaw}, p.22) that the rational map $\oplus: \PR^3 \times \PR^3 \dashrightarrow \PR^3$ defined by the four biquadratic polynomials $[X_0^2Y_1^2 - Y_0^2X_1^2, X_0Y_0Z_1T_1 - Z_0T_0X_1Y_1, 
X_0Z_0Y_1T_1 - Y_0T_0X_1Z_1, X_0T_0Y_1Z_1 - Y_0Z_0X_1T_1]$ coincides, whenever defined, with the group law on $\mathcal{J}_{u,v}$. If we compare these polynomials with
$\eta_{01}$, $\omega_{45}$, $\omega_{67}$, $\omega_{89}$ in \ref{polynomialexpressioncanonicalmap}, we clearly notice that $\oplus|_{\Cu \times \Cu} = [\eta_{01}, \omega_{45}, \omega_{67}, \omega_{89}]$. \newline 

The paper is organized as follows. We recall in section $2$ the general definition of addition law on a fixed, embedded abelian variety, and we specialize it to the case of biquadratic addition laws on embedded elliptic curves in $\PR^3$. In the last section, we show how the previously mentioned relationship with the biquadratic addition laws on embedded elliptic curves of the form $\mathcal{J}_{u,v}$ can be used to study the canonical map of $\Surf$. We recall that, by the projection formula, it holds a decomposition in $1$-dimensional vector spaces
\begin{align*}
H^0(A, \mathcal{L}) &= \bigoplus_{\chi \in \GrpJ}H^0(\Jac, \OC_{\Jac}(\Theta) \otimes L_{\chi}) \BEISTRICH
\end{align*}
where $L_{\chi}$ is a $2$-torsion line bundle on $\Jac$.
Clearly, for every non-trivial element $g$ there exists a unique non-trivial character $\chi$ of $\GrpJ$ whose kernel is generated by $g$, and we can define $\Surf_g$ as the zero locus of a non-zero section of $H^0(\Jac, \OC_{\Jac}(\Theta) \otimes L_{\chi})$. We conclude the third section with a proof of the following result:

\begin{theorem} \label{FUNDAMENTALPULLBACKINTRO}
Let $U$, $V$ be points on $\Surf$ such that $\phi_{\Surf}(U) = \phi_{\Surf}(V)$. Then one of the following cases occurs:
\begin{itemize}
\item $V = U$
\item $V = -g.U$ for some non-trivial element $g$ of $\GrpJ$. This case arises precisely when $U$ and $V$ belong to the canonical curve $\Surf \cap \Surf_g$.   
\item $V = g.U$  for some non-trivial element $g$ of $\GrpJ$.  This case arises precisely when $U$ and $V$ belong to the translate $\Surf_{h}$, for every $h \in \GrpJ - \{g\}$.
\item $U$ and $V$ are two base points of $|\Surf|$ which belong to the same $\GrpJ$-orbit.
\end{itemize}
\end{theorem}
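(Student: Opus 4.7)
The strategy is to decompose the six canonical sections of (\ref{polynomialexpressioncanonicalmap}) into the $\eta$-block (three sections in the squared coordinates) and the $\omega$-block (three biquadratic determinants), and exploit that these blocks transform differently under the natural $\GrpJ\rtimes\langle-1\rangle$-action on $\Surf$. The $\eta$-block descends through $\psi$ to $\Du\times\Du$ and is $\GrpJ$-invariant on each factor, while each $\omega$-section is an eigenvector, on each factor, for one of the three non-trivial characters of $\GrpJ$, with a sign under the $-1$-involution determined by the parity of the associated twisted theta section of $\sO_A(\Surf)\otimes L_\chi$. Given $U=[(P_0,Q_0)]$ and $V=[(P_1,Q_1)]$ with $\phi_\Surf(U)=\phi_\Surf(V)$, I would extract the constraints imposed by each block in turn.

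Together with the hyperplane relation $x+y+z+t=0$ of $\Du$, the three $\eta_{ij}$'s reconstruct the six Plücker coordinates of the chord of $\Du$ joining $\psi(P)$ and $\psi(Q)$. So in the non-degenerate case where some $\eta$-value is non-zero, $\phi_\Surf(U)=\phi_\Surf(V)$ forces the two chords to coincide, and since $\Du$ is a plane quartic, this chord meets $\Du$ in four points $\{R_1,\ldots,R_4\}$ from which both unordered pairs $\{\psi(P_i),\psi(Q_i)\}$ are drawn. Normalising $(P_0,Q_0)$ by the $\Z_2$-swap and the diagonal $\GrpJ$-action, the candidates for $V$ split into three geometric families on $\Surf$: (a) the four $\GrpJ$-translates $\{g.U\}$, corresponding to the same unordered pair on the chord; (b) the four involutes $\{-g.U\}$, corresponding to the residual pair on the chord -- here I would use the description $\Surf\to\Du^{(2)}\cong W_2(\Du)$ and the classical fact that residuation on a plane quartic realises the $-1$-involution on $\Jac$; (c) sixteen further candidates arising from the four mixed 2-subsets of $\{R_1,\ldots,R_4\}$.

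Next I would use the $\omega$-block to decide which candidates give genuine identifications. The transformation law $\omega_\chi(g.U)=\chi(g)\,\omega_\chi(U)$ gives: for $V=g.U$ with $g\neq e$, the equality $\phi_\Surf(V)=\phi_\Surf(U)$ (with some $\eta$ non-zero) forces $\omega_\chi(U)=0$ for each of the two characters with $\chi(g)=-1$; since the zero locus of $\omega_\chi$ on $\Surf$ is precisely $\Surf\cap\Surf_h$ with $h$ the non-trivial element of $\ker\chi$, this yields the third bullet of the theorem. The analogous sign computation, incorporating the parities of $-1$ on the $\omega$-sections, produces the second bullet from family (b). To eliminate family (c) I would invoke the biquadratic addition-law identity $\oplus|_{\Cu\times\Cu}=[\eta_{12},\omega_{45},\omega_{67},\omega_{89}]$ of the introduction: the rigidity of the group law on the pencil of elliptic intersections $\Jac_{u,v}$ rules out a projective coincidence $\oplus(P_0,Q_0)=\oplus(P_1,Q_1)$ for any mixed candidate $(P_1,Q_1)$ compatible with the chord constraint.

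Base points are handled separately: the common zero locus of the six sections on $\Cu\times\Cu$ descends to a finite, $\GrpJ$-stable subset of $\Surf$, and the indeterminacy directions of $\phi_\Surf$ at two base points agree precisely when they lie in a common $\GrpJ$-orbit. The principal obstacle is the elimination of family (c) -- combinatorially it generates sixteen spurious candidates per chord -- and the addition-law interpretation is the cleanest tool to dispose of them, converting a delicate case analysis on $\Cu\times\Cu$ into a tractable group-theoretic statement on the elliptic pencil $\Jac_{u,v}$.
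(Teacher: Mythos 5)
Your strategy is in essence the paper's: the $\eta$-block is the Gauss map and pins down a line $l$ with $l\cdot\Du=p+q+r+s$, and the $\omega$-block together with $\eta_{12}$ is (up to constants) the biquadratic addition law, which upgrades the coincidence of chords to an equality in a group law and kills the ``mixed'' pairs by cancellation. The paper packages this as Lemma \ref{interiorsumlemma}: the four sections $[\eta_{12},\alpha_U\beta_U\omega_{45},\beta_U\omega_{67},\alpha_U\omega_{89}]$ compute the group law of the elliptic curve $\mathcal{E}_U=\{aX^2+bY^2+cZ^2=0\}\cap\{X^2+Y^2+Z^2+T^2=0\}$ on which $P,Q,R,S$ all lie, so $\phi_{\Surf}(U)=\phi_{\Surf}(V)$ forces $\mu_U(P,Q)=\mu_U(R,S)$. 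Be aware that the identity $\oplus|_{\Cu\times\Cu}=[\eta_{12},\omega_{45},\omega_{67},\omega_{89}]$ you invoke is only heuristic as stated: $\Cu$ has genus $9$ and carries no group law, so the addition law must be transported to the curve $\mathcal{E}_U$, which varies with $U$; this requires the $U$-dependent square-root constants $\alpha_U,\beta_U$ and a verification that the branch ambiguity only perturbs the answer by the $\GrpJ$-action. That step is the technical heart of the argument, and your plan only gestures at it. Your character/sign bookkeeping for the second and third bullets is correct and matches the paper's.

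There are two concrete gaps. First, your non-degeneracy hypothesis is too weak: ``some $\eta$-value is non-zero'' only guarantees that the Gauss image is defined, whereas the addition-law argument needs $\mathcal{E}_U$ to be smooth, i.e.\ $a=\eta_{23}$, $b=-\eta_{13}$, $c=\eta_{12}$ pairwise distinct and all non-zero. When the line $\GAU(U)$ is special, $\mathcal{E}_U$ degenerates to a union of two conics or of four lines; there is then no group law to invoke, and the paper disposes of these cases by explicit parametrizations of the components and direct evaluation of the $\omega$'s --- this occupies more than half of the written proof, and your plan makes no provision for it. Second, the base-point case is mischaracterized: the six canonical sections have no common zero ($|\omega_{\Surf}|$ is base point free, the $\eta$'s alone already defining the Gauss map, which is a morphism on the smooth surface $\Surf$), so there are no ``indeterminacy directions'' to compare. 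The base points in the statement are the $16$ common zeros on $\Surf$ of the three $\omega$'s, i.e.\ the base locus of $|\OC_A(\Surf)|$; at such points $\phi_{\Surf}$ is computed by the $\GrpJ$-invariant Gauss map alone, which is what produces the fourth bullet and also why this case must be separated out before the elliptic-curve argument (there $\mathcal{E}_U$ is a double conic, never smooth).
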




\section{Addition laws of bidegree $(2,2)$ on elliptic curves in $\PR^3$}
\noindent Throughout this section, we denote by $(A, \mathcal{L})$ a polarized abelian variety, with $\mathcal{L}$ a very ample line bundle. We denote by $\phi$ the holomorphic embedding $\phi_{|\mathcal{L}|} \colon A \longrightarrow \PR^N$ defined by the linear system $|\mathcal{L}|$ on $A$. 
Moreover, we denote by $\mu, \delta \colon A \times A \longrightarrow A$ the morphisms respectively defined by the sum and the difference in $A$, and by $\pi_1$ and $\pi_2$ the projections of $A \times A$ onto the respective factors. To prevent misunderstandings, we refer to $\mu$ as the \textbf{group law} on $A$, and we distinguish it from the notion of addition law which we are going to introduce in this section. An \textbf{addition law} $\addl$ of bidegree $(m,n)$ is a rational map $\PR^N \times \PR^N \dashrightarrow \PR^N$ defined by an ordered set of bihomogeneous polynomials $(f_0, \cdots, f_N)$, each of bidegree $(m,n)$, and such that there exists a non-empty Zariski open set $U$ of $A \times A$ on which $\addl$ and $\mu$ coincide (see also \cite{AdditionLaw}).

\noindent An addition law on $A$ can be viewed then as a rational map $\addl \colon \PR^N \times \PR^N \dashrightarrow \PR^N$ such that the following diagram commutes: 
\begin{equation} \label{additionlawdiagram}
\xymatrix { A \times A \ar[r]_{\phi \times \phi}\ar[d]_{\mu}& \PR^N \times \PR^N \ar@{-->}[d]_{\addl} \\
A \ar[r]_{\phi}& \PR^N }
\end{equation}
\noindent Assigned an addition law $\addl$ on $A$ defined by bihomogeneous polynomials $(f_0 \cdots f_N)$, we denote by $W(\addl)$ the sublinear system of $|\mathcal{O}_{\PR^N}(m) \boxtimes \mathcal{O}_{\PR^N}(n)|$ generated by $f_0, \cdots, f_N$.

\noindent In particular, the rational map $A \times A \dashrightarrow \PR^N$, which in diagram \ref{additionlawdiagram} is defined as the composition $\phi \times \phi$ with $\addl$, is defined by $N+1$ linearly independent global sections of
\begin{align*}
(\phi \times \phi)^*\left(\mathcal{O}_{\PR^N}(m) \boxtimes \mathcal{O}_{\PR^N}(n)\right) = \pi_1^* \mathcal{L}^{m} \otimes \pi_2^* \mathcal{L}^{n} \PUNKT
\end{align*}
The morphism $\phi \circ \mu$ is defined, on the other side, by the complete linear system $|\mu^*\mathcal{L}|$ on $A \times A$. By applying the projection formula and by the fact that $\mu$ is a morphism with connected fibers, we have that
\begin{equation*}
H^0(A \times A, \mu^*\mathcal{L}) \cong H^0(A, \mathcal{L}) \PUNKT
\end{equation*}
\noindent Hence, a rational map $\addl: \PR^N \times \PR^N \dashrightarrow \PR^N$ of bidegree $(m,n)$ such that the previous diagram commutes can be expressed as a global section of 
\begin{equation} \label{Mmndefinition}
\mathcal{M}_{m,n} := \mu^*\mathcal{L}^{-1} \otimes \pi_1^* \mathcal{L}^{m} \otimes \pi_2^* \mathcal{L}^{n} \PUNKT
\end{equation}
Thus, our discussion justifies the following definition:
\begin{definition} (Addition law, \cite{LangeRuppert})
Let $(A, \mathcal{L})$ be a polarized abelian variety, where $\mathcal{L}$ il assumed to be very ample. Let $m$, $n$ two non-zero natural numbers. An \textbf{addition law of bidegree $(m,n)$ on $A$} is a global section of $\mathcal{M}_{m,n}$, the latter defined as in \ref{Mmndefinition}. 
\end{definition}
\noindent Let $s \in H^0(A \times A, \mathcal{M}_{m,n})$ be a non-zero addition law of bidegree $(m,n)$. If we consider the rational map $\addl: \PR^N \times \PR^N \dashrightarrow \PR^N$ defined by $s$, and $I_A$ the homogeneous defining ideal of $A$ in $\PR^N$, then the restriction of $\addl$ to $A \times A$ is given by some bihomogeneous polynomials $f_0 \cdots f_{N}$ of bidegree $(m,n)$ in $k[A] = k[\PR^{N}]/I_A$ which define the group law $\mu$ on $A$ away from the base locus $Z$ of $W(\addl)$. The locus $Z$, which is the indeterminacy locus of the rational map $\addl$, will be called \textbf{exceptional locus of $s$}. By looking at the map $\mu \circ \phi$ in diagram \ref{additionlawdiagram}, it can be seen now that this exceptional locus coincides with $div(s)$, and it is, in particular, a divisor in $A \times A$.

\begin{definition}
A set of addition laws $s_1 \cdots s_k$ of bidegree $(m,n)$ is said to be a \textbf{complete set of addition laws} if:
\begin{equation*}
div(s_1) \cap \cdots \cap div(s_k) = \emptyset \PUNKT
\end{equation*}
In particular, there exists a complete set of addition laws of bidegree $(m,n)$ if and only if $|\mathcal{M}_{m,n}|$ is base point free.
\end{definition}
\noindent The problem of determining, whether for a given bidegree $(m,n)$ with $m,n \geq 2$ there exists an addition law (resp. a complete set of addition laws), has been solved by Lange and Ruppert (see \cite{LangeRuppert} p. 610). Their main result is:
\begin{theorem}
Let $A$ be an abelian variety embedded in $\PR^N$, and $\mathcal{L} = \mathcal{M}^{m}$, with $m \geq 3$, a very ample line bundle defining the embedding of $A$ in $\PR^N$. Then:
\begin{itemize} 
\item There are complete systems of addition laws on $A \subseteq \PR^N$ of bidegree $(2,3)$ and $(3,2)$.
\item There exists a system of addition laws on $A \subseteq \PR^N$ of bidegree $(2,2)$ if and only if $\mathcal{L}$ is symmetric. Furthermore, in this case, there exists a complete system of addition laws.
\end{itemize}
\end{theorem}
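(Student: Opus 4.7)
The plan is to reformulate both assertions as questions about global sections and base loci of the line bundle
$$\mathcal{M}_{m,n} = \mu^*\mathcal{L}^{-1} \otimes \pi_1^*\mathcal{L}^m \otimes \pi_2^*\mathcal{L}^n$$
on $A \times A$, as set up in the discussion preceding the statement. The crucial input is the theorem of the square on the abelian variety $A$: for any symmetric line bundle $\mathcal{L}$ on $A$ (i.e.\ $[-1]^*\mathcal{L}\cong \mathcal{L}$), one has the canonical isomorphism
$$\mu^*\mathcal{L}\otimes \delta^*\mathcal{L}\cong \pi_1^*\mathcal{L}^2\otimes \pi_2^*\mathcal{L}^2,$$
where $\delta \colon A\times A \to A$, $\delta(x,y) := x - y$. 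Both halves of the theorem will be obtained from variants of this identity.

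For the bidegree $(2,2)$ case with $\mathcal{L}$ symmetric, the identity rewrites as $\mathcal{M}_{2,2}\cong \delta^*\mathcal{L}$. Since $\delta$ is a smooth surjective morphism with connected fibers (each isomorphic to $A$), flat base change yields $H^0(A\times A,\delta^*\mathcal{L})\cong H^0(A,\mathcal{L})$, which is nonzero of dimension $N+1$ as $\mathcal{L}$ is very ample, and moreover $\delta^*\mathcal{L}$ is base-point-free because $\mathcal{L}$ is, producing a complete system of addition laws. For the converse, if $\mathcal{L}$ is not symmetric write $\mathcal{L}\cong \tau_a^*\mathcal{L}_0$ with $\mathcal{L}_0$ symmetric and $a\in A\setminus A[2]$; substituting into $\mathcal{M}_{2,2}$ and applying the theorem of the square to $\mathcal{L}_0$, one finds that $\mathcal{M}_{2,2}$ differs from $\delta^*\mathcal{L}_0$ by a nontrivial element of $\Pic^0(A\times A)$ encoding the translation by $a$, and such a numerically trivial, nontrivial line bundle has no global section, forcing $H^0(A\times A,\mathcal{M}_{2,2})=0$.

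For the bidegree $(2,3)$ case (and $(3,2)$ by the evident symmetric argument after swapping factors), one must produce enough sections of $\mathcal{M}_{2,3}$ even in the non-symmetric setting. Writing $\mathcal{L}\cong \mathcal{L}_{\mathrm{sym}}\otimes \mathcal{N}$ with $\mathcal{L}_{\mathrm{sym}}$ symmetric and $\mathcal{N}\in \Pic^0(A)$, and combining the theorem of the square for $\mathcal{L}_{\mathrm{sym}}$ with pullbacks of $\mathcal{N}$ under morphisms of the form $(x,y)\mapsto ax+by$, one exhibits $\mathcal{M}_{2,3}$ as the pullback of an ample line bundle on $A$ along an appropriate surjection $\nu\colon A\times A\to A$. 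The hypothesis $m\geq 3$ is exactly what ensures enough positivity to absorb the $\Pic^0$-twist coming from the lack of symmetry and still produce a base-point-free linear system, giving both existence and completeness.

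The main obstacle is the bidegree $(2,3)$ case: in contrast to the clean pullback identification $\mathcal{M}_{2,2}\cong \delta^*\mathcal{L}$ available in the symmetric $(2,2)$ setting, the bidegree $(2,3)$ case does not admit such a single-step reduction. It requires a careful combination of the theorem of the square, translation identities on $A\times A$, and an essential use of the hypothesis $m\geq 3$ to simultaneously establish non-vanishing of $H^0(A\times A,\mathcal{M}_{2,3})$ and base-point-freeness of the associated linear system, the latter being the more delicate of the two.
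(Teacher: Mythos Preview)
The paper does not give its own proof of this statement: it is quoted as the main result of Lange--Ruppert and referenced to \cite{LangeRuppert}, p.~610. So there is no in-paper argument to compare your proposal against. The only piece the paper does spell out is the sentence immediately following the theorem, namely that for symmetric $\mathcal{L}$ one has $\mathcal{M}_{2,2}\cong\delta^*\mathcal{L}$ and hence $H^0(A\times A,\mathcal{M}_{2,2})\cong H^0(A,\mathcal{L})$; your treatment of the symmetric $(2,2)$ case reproduces exactly this.

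On the merits of your sketch: the $(2,2)$ analysis is essentially correct. Writing $\mathcal{L}=\mathcal{L}_0\otimes\mathcal{N}$ with $\mathcal{L}_0$ symmetric and $\mathcal{N}\in\Pic^0(A)$, one checks $\mathcal{M}_{2,2}\cong\delta^*\mathcal{L}\otimes\pi_2^*\mathcal{N}^2$, and after the shear $(x,y)\mapsto(x-y,y)$ this becomes $\pi_1^*\mathcal{L}\otimes\pi_2^*\mathcal{N}^2$, whose $H^0$ vanishes by K\"unneth exactly when $\mathcal{N}^2\neq\mathcal{O}$, i.e.\ when $\mathcal{L}$ is not symmetric. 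That is the clean version of your converse.

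There is, however, a genuine inaccuracy in your $(2,3)$ paragraph. You claim that $\mathcal{M}_{2,3}$ can be written as $\nu^*(\text{ample})$ for a single surjection $\nu\colon A\times A\to A$. This is false: carrying out the same computation gives, after the shear,
\[
\mathcal{M}_{2,3}\ \cong\ \pi_1^*\mathcal{L}\otimes\pi_2^*(\mathcal{L}\otimes\mathcal{N}^2),
\]
which is a genuine box product, not a pullback from one factor (its $h^0$ equals $h^0(\mathcal{L})\cdot h^0(\mathcal{L}\otimes\mathcal{N}^2)$, far larger than any $h^0$ on $A$). The correct conclusion still follows from this formula: both factors are algebraically equivalent to $\mathcal{L}=\mathcal{M}^m$ with $m\geq 3$, hence are translates of a very ample bundle and in particular base-point-free, so $\mathcal{M}_{2,3}$ is base-point-free on $A\times A$. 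You should replace the ``single surjection'' claim with this K\"unneth/box-product argument; as written, that step does not go through, and your final paragraph correctly senses but does not resolve the gap.
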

\noindent We focus now our attention on the case of biquadratic addition laws. 
When the line bundle $\mathcal{L}$ is symmetric, by applying the projection formula (note moreover that $\delta$ is a proper morphism with connected fibers) we have that
\begin{equation*}
H^0(A \times A, \mathcal{M}_{(2,2)}) = H^0(A \times A, \delta^* \mathcal{L}) \cong H^0(A, \mathcal{L})  \PUNKT
\end{equation*}

\noindent We see first a model of a smooth elliptic curve in $\PR^3$ not contained in any hyperplane:
\begin{definition}(Jacobi's model, see also \cite{AdditionLaw} p.21)
Let $u$, $v$, $w$ be three non-zero complex numbers such that $u+v+w = 0$. We denote by $\mathcal{J}_{u,v}$ the elliptic curve in $\PR^3$ with coordinates $X, \cdots, T$ defined as the complete intersection of two of the following three quadrics:
\begin{equation} \label{EquationsJacobi}
\mathcal{J}_{u,v}: \begin{cases}
uX^2 + Y^2 &= Z^2\\
vX^2 + Z^2 &=T^2 \\
wX^2 + T^2 &= Y^2 \PUNKT
\end{cases} 
\end{equation}
\end{definition}
\noindent On $\PR^3 \times \PR^3$, we denote by $[X_0 \cdots T_0]$ the coordinates on the first factor and by $[X_1 \cdots T_1]$ the coordinates for the second one. 
An explicit basis of the space of the biquadratic addition laws has been in determined \cite{AdditionLaw}: 
\begin{theorem} \label{thmADDITIONLAWS}
The vector space $H^0(J_{u,v} \times J_{u,v}, \mathcal{M}_{(2,2)})$ of the addition laws of bidegree $(2,2)$ for the elliptic curve $J_{u,v}$ in $\PR^3$ defined by the Jacobi quadratic equation is generated by:
\begin{align*}
\addl_X := &[X_0^2Y_1^2 - Y_0^2X_1^2, X_0Y_0Z_1T_1 - Z_0T_0X_1Y_1, \\
		&X_0Z_0Y_1T_1 - Y_0T_0X_1Z_1, X_0T_0Y_1Z_1 - Y_0Z_0X_1T_1]\\
\addl_Y := &[X_0Z_0Y_1T_1 + Y_0T_0X_1Z_1, -uX_0T_0X_1T_1 + Y_0Z_0Y_1Z_1, \\
		&uvX_0^2X_1^2 + Z_0^2Z_1^2, vX_0Y_0X_1Y_1 + Z_0T_0Z_1T_1]\\
\addl_Z := &[X_0Y_0Z_1T_1 + Z_0T_0X_1Y_1, uwX_0^2X_1^2 + Y_0^2Y_1^2, \\
		&uX_0T_0X_1T_1 + Y_0Z_0Y_1Z_1, -wX_0Z_0X_1Z_1 + Y_0T_0Y_1T_1]\\
\addl_T := &[u(X_0T_0Y_1Z_1 + Y_0Z_0X_1T_1), u(wX_0Z_0X_1Z_1 + Y_0T_0Y_1T_1), \\
 &u(-vX_0Y_0X_1Y_1 + Z_0T_0Z_1T_1), -vY_0^2Y_1^2 - wZ_0^2Z_1^2] \PUNKT
\end{align*}
Moreover, for every $H \in \{X,Y,Z,T\}$, the exceptional divisor of $\addl_H$ is $\delta^{*}(H)$, where $H$ denotes the corresponding hyperplane divisor $\PR^3$.
\end{theorem}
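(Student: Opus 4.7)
The plan is to establish the theorem in three stages: a dimension count for $H^0(J_{u,v} \times J_{u,v}, \mathcal{M}_{(2,2)})$, an abstract identification of this space with pullbacks of hyperplane sections along $\delta$, and an explicit verification that the four polynomial tuples $\oplus_X, \oplus_Y, \oplus_Z, \oplus_T$ realise the corresponding distinguished sections.

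\textbf{Step 1 (Dimension count).} The line bundle $\mathcal{L} = \mathcal{O}_{J_{u,v}}(1)$ defining the embedding in $\PR^3$ is symmetric, as the Jacobi equations \ref{EquationsJacobi} are invariant under $[X{:}Y{:}Z{:}T] \mapsto [-X{:}Y{:}Z{:}T]$ which realises the inverse map of the elliptic curve. The theorem of the square for a symmetric line bundle gives an isomorphism $\mu^*\mathcal{L} \otimes \delta^*\mathcal{L} \cong \pi_1^*\mathcal{L}^2 \otimes \pi_2^*\mathcal{L}^2$, and substituting into the defining formula \ref{Mmndefinition} yields $\mathcal{M}_{(2,2)} \cong \delta^*\mathcal{L}$. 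Since $\delta$ is proper with connected fibers, the projection formula gives $H^0(J_{u,v} \times J_{u,v}, \mathcal{M}_{(2,2)}) \cong H^0(J_{u,v}, \mathcal{L})$, and Riemann--Roch on an elliptic curve for the degree $4$ line bundle $\mathcal{L}$ gives $h^0(J_{u,v}, \mathcal{L}) = 4$.

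\textbf{Step 2 (Identification with hyperplane pullbacks).} Under the isomorphism above, each of the four linearly independent hyperplane sections $X, Y, Z, T \in H^0(J_{u,v}, \mathcal{L})$ corresponds to a distinguished section $\sigma_H \in H^0(J_{u,v} \times J_{u,v}, \mathcal{M}_{(2,2)})$ whose exceptional divisor equals $\delta^*(H)$. The four sections $\sigma_X, \sigma_Y, \sigma_Z, \sigma_T$ are linearly independent (their divisors $\delta^*(X), \delta^*(Y), \delta^*(Z), \delta^*(T)$ are pairwise distinct), and by the dimension count they therefore form a basis of $H^0(J_{u,v} \times J_{u,v}, \mathcal{M}_{(2,2)})$.

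\textbf{Step 3 (Explicit verification).} It remains to show that each biquadratic tuple $\oplus_H$ from the statement is a non-zero scalar multiple of $\sigma_H$. I would proceed in two sub-steps. First, one checks that each of the four tuples defines a rational map that coincides with the group law $\mu$ on a dense open subset of $J_{u,v} \times J_{u,v}$: this is done by reducing the symbolic computation to the Jacobi quartic group law (for which one can use the classical parametrisation via Jacobi elliptic functions $X = \mathrm{sn}(t)$, $Y = \mathrm{cn}(t)$, $Z = \mathrm{dn}(t)$, or equivalently by reducing modulo the ideal generated by $uX^2 + Y^2 - Z^2$ and $vX^2 + Z^2 - T^2$ on each factor). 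Hence each $\oplus_H$ lies in $H^0(J_{u,v} \times J_{u,v}, \mathcal{M}_{(2,2)})$. Second, one checks that the exceptional divisor of $\oplus_H$ contains $\delta^*(H)$ by evaluating the four polynomial entries at a generic pair $(P, Q)$ with $P - Q \in \{H = 0\}$ and showing that all four components vanish. Since both divisors $\mathrm{div}(\oplus_H)$ and $\delta^*(H)$ are effective divisors in the same linear equivalence class (class of $\mathcal{M}_{(2,2)}$), an inclusion between them forces equality, whence $\oplus_H$ is a non-zero scalar multiple of $\sigma_H$ and its exceptional divisor is exactly $\delta^*(H)$.

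\textbf{Main obstacle.} The central difficulty is the explicit verification in Step 3 that the four biquadratic tuples reduce to the group law modulo the ideal of $J_{u,v} \times J_{u,v}$. The natural approach is through a parametrisation by theta functions (or classical Jacobi elliptic functions), turning the polynomial identities into theta-function identities of level $2$. The alternative ``synthetic'' route, identifying each $\oplus_H$ via its exceptional divisor, sidesteps direct calculation but requires one to first exhibit the vanishing of $\oplus_H$ along $\delta^*(H)$, which still demands some explicit manipulation with the Jacobi quadrics.
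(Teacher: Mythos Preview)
Your proposal is correct, and in fact goes considerably further than the paper does: the paper's entire proof of this theorem is the single line ``See \cite{AdditionLaw}, p.~22'', deferring the argument wholesale to Arene--Kohel--Ritzenthaler. So there is nothing in the paper's own treatment to compare your argument against.

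That said, your outline is essentially the argument one finds in the cited reference. The dimension count in Step~1 is already recorded in the paper just before the Jacobi model is introduced (the isomorphism $H^0(A\times A,\mathcal{M}_{(2,2)})\cong H^0(A,\mathcal{L})$ via $\mathcal{M}_{(2,2)}\cong\delta^*\mathcal{L}$), so you are on solid ground there. Your Step~2 is the natural structural observation, and Step~3 is where the genuine work lies. Your two proposed routes---parametrisation by theta or Jacobi elliptic functions, versus checking vanishing along $\delta^*(H)$ and invoking linear equivalence---are both viable; the reference \cite{AdditionLaw} proceeds more computationally. One small sharpening: in Step~3 you should also verify that each tuple $\oplus_H$ is not identically zero on $J_{u,v}\times J_{u,v}$ (otherwise the divisor argument does not apply), which is immediate from evaluating at a suitable pair of points but worth stating.
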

\begin{proof}
See \cite{AdditionLaw}, p.22
\end{proof}
\begin{remark} \label{rmkTOR}
\noindent Note that, by theorem \ref{thmADDITIONLAWS}, the exceptional divisor of $\addlS$ is $\delta^{*}((X = 0))$ and the divisor $(X=0)$ on the elliptic curve $\mathcal{J}_{u,v}$ is exactly $T_{id} + T_a + T_b + T_{ab}$,
where $a$ and $b$ are generators of $\GrpJ$ acting on the coordinates of $\PR^3$ as in lemma\ref{fundamentallemmagenus9}, and
\begin{align} \label{2pointsexpression}
T_{id} := \begin{bmatrix} 0\\
  					1\\
					1\\
					1
 	 \end{bmatrix} \ \ 
	 T_a := \begin{bmatrix} 0\\
  					1\\
					-1\\
					-1
 	 \end{bmatrix} \ \ 
	 T_b := \begin{bmatrix} 0\\
  					-1\\
					1\\
					-1
 	 \end{bmatrix} \ \ 
	 T_{ab} := \begin{bmatrix} 0\\
  					-1\\
					-1\\
					1
 	 \end{bmatrix} \PUNKT
\end{align}
As the notation suggests, for every element $g$ of $\GrpJ$ the natural action of the point $T_g$ on $J_{u,v} - \Delta_2)$ via $\addlS$ coincides with the action of $g$. Hence $\Delta_2 := \{T_{id}, T_{a}, T_{b}, T_{ab}\}$ is the group of $2$-torsion points on $J_{u,v}$.

It is moreover possible to verify that, according to theorem \ref{thmADDITIONLAWS}, the addition law $\addlS$ is not defined precisely on the union of the four copies of $\mathcal{J}_{u,v}$ in $\mathcal{J}_{u,v} \times \mathcal{J}_{u,v}$ which correspond to the $2$-torsion points of $\mathcal{J}_{u,v}$:
\begin{equation*}
Z := div(\addlS) = \bigcup_{g \in \GrpJ} \{(P,g.P) \ | \ P \in \mathcal{J}_{u,v}\} \subseteq \PR^3 \times  \PR^3 \PUNKT
\end{equation*}
\end{remark}
\begin{definition} \label{JACOBIADD}
\noindent To simplify the notations we will denote the addition law $\addl_X$ on $\mathcal{J}_{u,v}$ simply by $\addl$, and we denote the defining biquadratic polynomials by
\begin{align} \label{canonicalmapasadditionlaw}
\begin{split}
\eta_{12} &:= 			\begin{vmatrix} 									X_0^2	&	X_1^2\\
  																Y_0^2	&	 Y_1^2
 					 \end{vmatrix}
						\\ 
\omega_{45} &:= 		\begin{vmatrix} 									X_0Y_0	&	X_1Y_1\\
  																Z_0T_0	&	 Z_1T_1\\
 					 \end{vmatrix} \\		
\omega_{67} &:= 		\begin{vmatrix} 									X_0Z_0	&	X_1Z_1\\
  																Y_0T_0	&	 Y_1T_1\\
 					 \end{vmatrix} \\			 
\omega_{89} &:= 		\begin{vmatrix} 									X_0T_0	&	X_1T_1\\
  																Y_0Z_0	&	 Y_1Z_1\\
 					 \end{vmatrix}	\PUNKT
\end{split}
\end{align}
\end{definition}


\begin{definition}{(A more general model in $\PR^3$)} \label{EllipticCurveExample} 
For our applications we need a slightly different model of smooth elliptic curve in $\PR^3$. Under the hypothesis that $a$,$b$,$c$ and $d$ are all distinct complex numbers, the curve $\mathcal{E}$ in $\PR^3$ defined by the following couple of quadrics is a smooth elliptic curve:
\begin{equation}\label{starteq}
\mathcal{E} :=  \begin{cases} aX^2 + bY^2 + cZ^2 + dT^2   =  0 \\
							X^2 + Y^2 + Z^2 + T^2 = 0 \ \ \ \ \ \ \ \PUNKT \end{cases}
\end{equation}
We can see now that, up to a choice of signs which represents the action of a $2$-torsion point on $\mathcal{E}$, we can define an addition law which plays the role of the addition law $\addlS$ defined on the Jacobi model in definition \label{JACOBIADD} and Theorem \ref{thmADDITIONLAWS}. The first step is to work out the equations \ref{starteq} to obtain a Jacobi model isomorphic to $\mathcal{E}$ (see equations \ref{EquationsJacobi}). We have

\begin{equation}
\mathcal{E}:  \begin{cases} \frac{a-d}{b-d}X^2  + Y^2 = \frac{c-d}{b-d}Z^2\\
							-\frac{a-c}{b-c}X^2  + \frac{c-d}{b-c}T^2  = Y^2 \ \ \ \ \ \PUNKT \\
							  \end{cases}
\end{equation}
We consider now $\alpha$ and $\beta$ square roots of $\frac{c-d}{b-d}$ and $\frac{c-d}{b-c}$ respectively. By rescaling the coordinates $Z$ and $T$ with $\alpha$ and $\beta$ we see that $\mathcal{E} \cong \mathcal{J}_{\frac{a-d}{b-d},-\frac{a-c}{b-c}}$ and we obtain on $\PR^3 \times \PR^3$ a rational map corresponding to $\addlS$, which represent an addition law of $\mathcal{E}$, up to the choice of the sign of $\alpha$ and $\beta$:
\begin{equation} \label{actionup2points}
\addlS \colon (P,Q) \daschmapsto  \begin{bmatrix} \eta_{12}(P,Q)\\
  						 \alpha \beta \omega_{45}(P,Q)\\
						 \beta \omega_{67}(P,Q)\\
						\alpha \omega_{89}(P,Q)\\
 					 \end{bmatrix} \PUNKT
\end{equation}
Indeed, the rational map defined in \ref{actionup2points} is an addition law up to the action of a $2$-torsion point, according to remark \ref{rmkTOR}. 
This means that this rational map $\addlS$ represents an operation on $\mathcal{E}$ of the following form:
\begin{equation*}
\tilde{\mu}(P,Q) = \mu(T,\mu(P,Q)) = T + P + Q \BEISTRICH
\end{equation*}
where $T$ is a $2$-torsion point on $\mathcal{E}$. 
\end{definition}


\section{The canonical map of the $(1,2,2)$ Theta-divisor and its \\ geometry} \label{geometryCan}

\noindent The sublinear system of $|\omega_{\Surf}|$ generated by the $\GrpJ$-invariant sections $\eta_{12}$, $\eta_{13}$ and $\eta_{23}$ defines the Gauss map $\GAU: \Surf \longrightarrow {\PR^2}^{\vee}$. This map factors through the isogeny $p$ and the Gauss map of $\Theta$, which can be seen as the map which associates to every divisor $p+q$ on $\Du$ the unique line $l$ in $\PR^2 = \PR(H^0(\Du, \omega_{\Du}))$ which cuts on $\Du$ a canonical divisor greater than $u+v$.

\noindent We aim now to describe the behavior of the component of the canonical map of $\Surf$ which is defined by the other three holomorphic sections of the canonical bundle of $\Surf$, which are $\omega_{45}$, $\omega_{67}$ and $\omega_{89}$.
First, we have that the image of the restriction map $H^0(A, \OC_A(\Surf)) \longrightarrow H^0(\Surf, \omega_{\Surf})$ is the subspace generated by $\omega_{45}$, $\omega_{67}$ and $\omega_{89}$.

\begin{definition} \label{importantsectionsS}
In the decomposition in $1$-dimensional vector spaces
\begin{align*}
H^0(A, \OC_A(\Surf)) &= \bigoplus_{\chi \in \GrpJ}H^0(\Jac, \OC_{\Jac}(\Du^{(2)}) \otimes L_{\chi}) \BEISTRICH
\end{align*}
where $L_{\chi}$ are $2$-torsion line bundle on $\Jac$, we have that
\begin{align*}
H^0(\Jac, \OC_{\Jac}(\Du^{(2)}) \otimes L_{\chi_a}) &= \left< \omega_{45} \right> \\
H^0(\Jac, \OC_{\Jac}(\Du^{(2)}) \otimes L_{\chi_b}) &= \left< \omega_{67} \right> \\
H^0(\Jac, \OC_{\Jac}(\Du^{(2)}) \otimes L_{\chi_{ab}}) &= \left< \omega_{89} \right> \BEISTRICH
\end{align*}
where $\chi_g$ denotes the unique non-trivial character of $\GrpJ$ such that $\chi_g(g) = 1$.
Clearly, for every non-trivial element $g$ there exists a unique non-trivial character $\chi$ of $\GrpJ$ whose kernel is generated by $g$, and we can define $\Surf_g$ as the zero locus of the generator of $H^0(\Jac, \OC_{\Jac}(\Theta) \otimes L_{\chi_g})$.

The multiplication by $-1$ in the Jacobian $\Jac$ corresponds to the Serre involution in $\Du^{(2)}$, which sends a divisor $p+q$ to the unique divisor $r+s$ such that $p+q+r+s$ is a canonical divisor on $\Du$. Hence, all global sections of $H^0(A, \Surf)$ are odd, being $\Du^{(2)}$ a translated of $\Theta$ with an odd theta characteristic, and being $\Theta$ the zero locus of the Riemann Theta function, which is an even function.
Moreover, one can easily see that the base locus of $|\OC_A(\Surf)|$ is a set of $16$ points ($A$ is supposed to be general), which on $\Surf$ is defined as the set where $\omega_{45}$, $\omega_{67}$ and $\omega_{89}$ vanish. In remark \ref{examplebasepts} we will characterize this locus in terms of the equation of the curve $\Du$.
\end{definition}



\begin{definition}
\noindent Let $U := [(P,Q)]$ be a point on $\Surf$, and 
$r := \GAU(U)$ the line $\{ax + by + cz = 0\}$, where
\begin{align} \label{definitionsbasicletters}
[a, b,c] &= [\eta_{12}(U), -\eta_{02}(U),\eta_{01}(U) ] \in \PR^2 \PUNKT
\end{align}
The pullback of this line through the rational map $\psi \colon \PR^3 \longrightarrow \PR^3$ which squares the coordinates (see \ref{corollarycover}) is the quadric
\begin{align*}
\mathcal{R}_{U}: aX^2 + bY^2 + cZ^2 = 0 \PUNKT
\end{align*}


\noindent Finally, we denote bt $\mathcal{E}_{U}$ the locus defined by the intersection of $\mathcal{R}_{U}$ with  the $\GrpJ$-invariant quadric of $\PR^3$ containing $\Cu$ (see equation \ref{corollaryscrolleq}):

\begin{equation*}
\mathcal{E}_{U} :=  \begin{cases} aX^2 + bY^2 + cZ^2   =  0 \\
							X^2 + Y^2 + Z^2 + T^2 = 0 \end{cases} \PUNKT
\end{equation*}

The curve $\mathcal{E}_{U}$ is a smooth curve of genus $1$ if and only if $a$, $b$ and $c$ are non zero and all distinct. In this case, (c.f. definition \ref{EllipticCurveExample}) there exist two constants $\alpha_U$ and $\beta_U$, which depend only on $a$, $b$, $c$, and a biquadratic addition law $\addlSP{U}$ on $\mathcal{E}_{U}$, which is defined as follows:
\begin{equation*}
\addlSP{U}: (X, Y) \daschmapsto \begin{bmatrix} \eta_{01}(X,Y)\\
  						 \alpha_U \beta_U \omega_{45}(X,Y)\\
						 \beta_U \omega_{67}(X,Y)\\
						\alpha_U \omega_{89}(X,Y)\\
 					 \end{bmatrix} \PUNKT
\end{equation*}
\end{definition}

By definition it follows that, if for two points $U=[P,Q]$ and $V=[R,S]$ we have that $\phi_{K_S}(U) = \phi_{K_S}(V)$, then $U$ and $V$ define the same locus $\mathcal{E}_{U}$. We prove now that a closer relationship between the group law $\mathcal{E}_{U}$ and the canonical group of $\Surf$ holds:


\begin{lemma} \label{interiorsumlemma}  
Let be $U = [P,Q]$ and $V = [R,S]$ two points of $\mathcal{S}$ such that $\mathcal{E}_U$ and $\mathcal{E}_V$ are smooth. If $\phi_{K_S}(U) = \phi_{K_S}(V)$, then  
$\mathcal{E}_U = \mathcal{E}_V$ and $\mu_U(P,Q) = \mu_U(R,S)$ holds, where $\mu_U$ is the group law in $\mathcal{E}_U$.
\end{lemma}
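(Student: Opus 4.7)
The starting observation is that the hypothesis $\phi_{K_S}(U) = \phi_{K_S}(V)$ supplies a common scalar $\lambda \in \mathbb{C}^{\ast}$ such that each of the six canonical coordinates at $V$ equals $\lambda$ times the corresponding coordinate at $U$:
\begin{equation*}
\eta_{ij}(V) = \lambda\,\eta_{ij}(U), \qquad \omega_{k\ell}(V) = \lambda\,\omega_{k\ell}(U).
\end{equation*}
My plan is to exploit this common scaling factor in two stages: first on the three $\eta$'s (which govern the geometry of $\mathcal{E}_U$), and then on the four sections $\eta_{01},\omega_{45},\omega_{67},\omega_{89}$ (which assemble the addition law).

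First I would use the three $\eta$-coordinates, which define the Gauss map $\GAU$. Their proportionality implies $\GAU(U) = \GAU(V)$ in $(\mathbb{P}^2)^{\vee}$, so the line $r$ arising in the definition of $\mathcal{R}_{U}$ is the same for $U$ and $V$. By construction $\mathcal{R}_U$ is simply the pullback of this line through the squaring map $\psi$, intersected with the invariant quadric $X^2+Y^2+Z^2+T^2=0$, and hence $\mathcal{R}_U = \mathcal{R}_V$ and $\mathcal{E}_U = \mathcal{E}_V$ as curves in $\mathbb{P}^3$. Both are smooth by assumption, so in particular the coefficient triple $[a:b:c]$ entering Definition \ref{EllipticCurveExample} is well-defined and projectively the same for $U$ and $V$.

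Next I would observe that the constants $\alpha_U,\beta_U$ are, by the explicit formulas
$\alpha^2 = \tfrac{c-d}{b-d}$, $\beta^2 = \tfrac{c-d}{b-c}$ (with $d=0$ in the present setting), functions of the ratios of $a,b,c$ only, and hence invariant under the scaling $[a:b:c] \mapsto [\lambda a:\lambda b:\lambda c]$. Fixing compatible choices of square roots then gives $\alpha_U = \alpha_V$ and $\beta_U = \beta_V$, so the rational maps $\addlSP{U}$ and $\addlSP{V}$ are literally identical on $\mathcal{E}_U\times\mathcal{E}_U = \mathcal{E}_V\times\mathcal{E}_V$. Writing
\begin{equation*}
\mu_U(P,Q) = \bigl[\,\eta_{01}(U) \;:\; \alpha_U\beta_U\,\omega_{45}(U) \;:\; \beta_U\,\omega_{67}(U) \;:\; \alpha_U\,\omega_{89}(U)\,\bigr]
\end{equation*}
and the analogous formula for $\mu_V(R,S)$, the common scaling factor $\lambda$ propagates through all four coordinates, so the two four-tuples are proportional and define the same point of $\mathbb{P}^3$. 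Combined with $\mu_U = \mu_V$, this gives $\mu_U(P,Q) = \mu_U(R,S)$.

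The step I expect to require the most care is the reduction from the equality of Gauss images to the equality of the unordered triple $[a:b:c]$ with matching sign choices for $\alpha,\beta$: one must check that the two-fold sign ambiguity in choosing the square roots is harmless because different choices translate $\addlSP{U}$ by a $2$-torsion point of $\mathcal{E}_U$ (see Remark \ref{rmkTOR}), and this translation acts equally on $\mu_U(P,Q)$ and on $\mu_U(R,S)$, preserving their equality. Everything else is a direct unwinding of Definition \ref{EllipticCurveExample} and of the basis \ref{polynomialexpressioncanonicalmap} for $H^0(\Surf,\omega_\Surf)$.
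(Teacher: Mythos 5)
Your argument is correct and follows essentially the same route as the paper's: the three $\eta$'s force $\GAU(U)=\GAU(V)$, hence $\mathcal{E}_U=\mathcal{E}_V$ with the same (ratio-dependent) constants $\alpha,\beta$; the remaining four canonical coordinates, weighted by $\alpha,\beta$, then compute the addition law $\addlSP{U}$, which is the projective embedding of $\mu_U$ up to a $2$-torsion translate, so proportionality of the canonical images plus injectivity of the embedding gives $\mu_U(P,Q)=\mu_U(R,S)$. The paper packages exactly this computation as a locally defined holomorphic map $\Psi$ expressed through theta functions, and your handling of the sign ambiguity in $\alpha,\beta$ via Remark \ref{rmkTOR} matches the paper's remark that a different branch choice only changes $\Psi$ by the $\GrpJ$-action.
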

\begin{proof}
Let us consider the addition law $\addlSP{U}$ defined on $\mathcal{E}_U$. For every point $W=[A,B]$ in a suitable neighborhood $\mathcal{U}$ of $U$ in $\Surf$, the locus $\mathcal{E}_W$ is still a smooth elliptic curve, and we can then denote by $\tau_W$ a corresponding element in the Siegel upper half plane $\mathcal{H}_1$ such that $\mathcal{E}_W = \bigslant{\C}{\Z \oplus \tau_W\Z}$. Moreover, for every $W$ in such a neighborhood it is well-defined $\mu_W(W)$, where $\mu_W$ denotes the group law in $\mathcal{E}_W$ and
\begin{equation*}
\mu_W(W) := \mu_W(A,B) \PUNKT
\end{equation*}
Indeed, it can be easily seen that the definition does not depend on the choice of the representative of $W$. 

\noindent We denote now by $\theta_0(z,\tau_W), \theta_1(z,\tau_W), \theta_2(z,\tau_W), \theta_3(z,\tau_W)$ the four theta functions defining the embedding of $\mathcal{E}_W$ in $\PR^3$, and by $\Psi$ the holomorphic map $\Psi: \mathcal{U} \longrightarrow \PR^3$ defined as follows:
\begin{equation*}
\begin{pmatrix} 1 & & & \\
			  & \alpha & & \\
			  & & \beta & \\
			  & &  & \alpha\beta \\
			 \end{pmatrix} \circ \pi \circ \phi_{\omega_S} \BEISTRICH
\end{equation*}
 where $\pi$ is the following projection $\PR^5 \dashrightarrow \PR^3$:
 \begin{equation*}
 [\eta_{01}, \eta_{02}, \eta_{12}, \omega_{45}, \omega_{67}, \omega_{89}] \daschmapsto [\eta_{01}, \omega_{45}, \omega_{67}, \omega_{89}] \BEISTRICH
 \end{equation*}
 and $\alpha$ and $\beta$ determinations of square roots of $-\frac{\eta_{12}}{\eta_{13}}$ and $-\frac{\eta_{12}}{\eta_{13}+\eta_{12}}$ respectively, which are defined according to definitions \ref{definitionsbasicletters} and \ref{EllipticCurveExample}.
The map $\Psi$ is defined everywhere on $\mathcal{U}$ because, on every point of $\mathcal{U}$, we have that $\eta_{12} \neq 0$ and $\eta_{12} \neq - \eta_{13}$ by definition of $\mathcal{U}$, and in particular $\alpha$ and $\beta$ can be considered simply as holomorphic functions defined on $\mathcal{U}$ as well and with values in $\C^*$. We remark, furthermore, that the choice of the branch of the square root used to define $\alpha$ and $\beta$ is not important because another choice leads to a sign-change of the coordinates to the function $\Psi$ accordingly to the action of the group $\GrpJ$ on the coordinates of $\PR^3$ (cf. definition \ref{EllipticCurveExample}).
The map $\Psi$ is then:
\begin{align*} 
\Psi(W)&= [\eta_{01}(W), \alpha \beta \omega_{45}(W), \beta \omega_{67}(W), \alpha \omega_{89}(W)] = \addlSP{W}(W) \\
&= [\theta_0 \circ \mu_W(W), \theta_1 \circ \mu_W(W), \theta_2 \circ \mu_W(W), \theta_3 \circ \mu_W(W)] \PUNKT
\end{align*}
Hence, if $\phi_{\Surf}(U) = \phi_{\Surf}(V)$ and $\mathcal{E}_U = \mathcal{E}_V$ are smooth elliptic curves, then $\Psi(U) = \Psi(V)$ and in particular there exists a non-zero $\zeta \in \C^*$ such that for every $j = 0, \dotsc 3$ we have
\begin{align*}
\theta_j \circ \mu_U(U) = \zeta \cdot \theta_j \circ \mu_U(V) \PUNKT
\end{align*}
On the other hand, the sections $\theta_j$ on $\mathcal{E}_U$, with $j = 0, \dotsc 3$, embedd $\mathcal{E}_U$ in $\PR^3$, so we can conclude that $\mu_U(U) = \mu_U(V)$.
\end{proof}



\begin{remark}\label{examplebasepts}
In the notation of lemma \ref{fundamentallemmagenus9}, we consider the quartic curve $\Du$ in $\PR^3$ defined by
\begin{align*}
\Du: \begin{cases}
x + y + z + t &= 0 \\
q(x,y,z,t)^2&= xyzt \PUNKT
\end{cases}
\end{align*}
We see that the lines $x$, $y$, $z$ and $t$ in the plane $H: x+y+z+t=0$ are bitangents. For every such a line $l$ we denote by $l_1 + l_2$ the effective divisor on $\Du$ such that
\begin{equation*}
l.\mathcal{D} = 2(l_1 + l_2) \PUNKT
\end{equation*}
We select two points $L_1$ and $L_2$ in the respective preimages in $\Cu$ with respect to $p$. Then, by remark \ref{importantsectionsS}, we see that $\GrpJ.[(L_1, L_2)]$ is a $\GrpJ$-orbit of base points for $\mathcal{L}$ in $A$, since $\omega_{45}$, $\omega_{67}$ and $\omega{89}$ vanish on $[(L_1, L_2)]$. Since the set of base points of a $(1,2,2)$-polarization on a generic abelian variety $A$ is a finite set of $2$-torsion points on $A$ of order $16$, we have determined all base points.


\end{remark}

\begin{theorem} \label{FUNDAMENTALPULLBACK}
Let $U$, $V$ be points on $\Surf$ such that $\phi_{\Surf}(U) = \phi_{\Surf}(V)$. Then one of the following cases occurs:
\begin{itemize}
\item $V = U$
\item $V = -g.U$ for some non-trivial element $g$ of $\GrpJ$. This case arises precisely when $U$ and $V$ belong to the canonical curve $\Surf \cap \Surf_g$.   
\item $V = g.U$  for some non-trivial element $g$ of $\GrpJ$.  This case arises precisely when $U$ and $V$ belong to the translate $\Surf_{h}$, for every $h \in \GrpJ - \{g\}$.
\item $U$ and $V$ are two base points of $|\Surf|$ which belong to the same $\GrpJ$-orbit.
\end{itemize}
\end{theorem}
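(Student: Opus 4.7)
The plan is to prove both directions. For the ``if'' direction I would verify each case by a direct transformation computation on the basis \ref{polynomialexpressioncanonicalmap}. The three sections $\eta_{12},\eta_{13},\eta_{23}$ are $\GrpJ$-invariant and $[-1]_A$-invariant, since they descend through the isogeny $p$ to the Gauss components of $\Du^{(2)}\subset\Jac$. By Definition \ref{importantsectionsS}, each $\omega_H$ lies in the isotypic eigenspace $H^0(\Jac,\OC(\Du^{(2)})\otimes L_{\chi_H})$ for a specific non-trivial character $\chi_H$ of $\GrpJ$ and is odd under $[-1]_A$ (since $\Du^{(2)}$ is a translate of $\Theta$ by an odd theta-characteristic, forcing every section of $\mathcal{O}_A(\Surf)$ to be odd), which gives
\[
\omega_H(g\cdot U)=\chi_H(g)\,\omega_H(U),\qquad \omega_H(-g\cdot U)=-\chi_H(g)\,\omega_H(U).
\]
Substituting these into the six components of $\phi_{\Surf}$ yields, for non-trivial $g$: $\phi_{\Surf}(-g\cdot U)=\phi_{\Surf}(U)$ if and only if $\omega_g(U)=0$, i.e.\ $U\in\Surf\cap\Surf_g$; and $\phi_{\Surf}(g\cdot U)=\phi_{\Surf}(U)$ if and only if $\omega_h(U)=0$ for every non-trivial $h\ne g$. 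The base-point case is immediate, since all $\omega_H$ vanish at base points and the $\eta_{ij}$ are $\GrpJ$-invariant.

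For the converse, suppose $\phi_{\Surf}(U)=\phi_{\Surf}(V)$. If $U$ is a base point then all $\omega_H(V)$ vanish as well, so $V$ is a base point; the $\GrpJ$-invariance of the $\eta_{ij}$'s combined with Remark \ref{examplebasepts} forces $V$ into the $\GrpJ$-orbit of $U$, yielding case 4. Otherwise I would argue that the bad locus $\{U:\mathcal{E}_U\text{ singular}\}$ is a proper closed subset of $\Surf$, which can be handled by a limit argument; assuming $\mathcal{E}_U$ smooth, Lemma \ref{interiorsumlemma} gives $\mathcal{E}_U=\mathcal{E}_V$ and the group-law identity $P+Q=R+S$ in $\mathcal{E}_U$, writing $U=[P,Q]$ and $V=[R,S]$. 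The intersection $\Cu\cap\mathcal{E}_U$ is a $\GrpJ$-invariant $16$-point set splitting into four size-$4$ orbits over the four points $p,q,p',q'$ of $\Du$ cut by the line $r=\GAU(U)$; fixing lifts $P,Q,P',Q'\in\Cu$ of these points and writing $O_1=\GrpJ\cdot P,\dots,O_4=\GrpJ\cdot Q'$, the group $\GrpJ$ acts on $\mathcal{E}_U$ through the three non-zero $2$-torsion translations $t_a,t_b,t_{ab}$. Running through the possible orbit pairs containing $(R,S)$ shows that, away from a further proper closed locus (the Gauss ramification), the only configurations compatible with $R+S=P+Q$ are $(R,S)\in(O_1\times O_2)\cup(O_2\times O_1)$, which reduces modulo $\Delta_{\GrpJ}\times\Z_2$ to $V=U$; and $(R,S)\in(O_3\times O_4)\cup(O_4\times O_3)$ under the $2$-torsion condition $(P+Q)-(P'+Q')=t_k$ for some $k\in\GrpJ$, in which case, identifying $[P',Q']$ with $-U$ via the Serre involution on $\Du^{(2)}$, one gets $V=-k\cdot U$.

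The final step is to match the $2$-torsion identity $(P+Q)-(P'+Q')=t_k$ on $\mathcal{E}_U$ with the vanishing $\omega_k(U)=0$: both express the vanishing of the $\chi_k$-component of the section $\addlSP{U}(U)$, an equivalence I would read off from the theta-function parametrization used in the proof of Lemma \ref{interiorsumlemma}. This identification is the main obstacle of the argument, since it requires a careful translation between the $2$-torsion arithmetic on $\mathcal{E}_U$ and the global character decomposition of $H^0(A,\mathcal{L})$. Once it is in place, $k\ne id$ yields case 2, while $k=id$ reduces to $U$ being a base point; case 3 then falls out by iteration, since if $U\in\Surf\cap\Surf_h\cap\Surf_{h'}$ with $hh'=g$, then $-h\cdot U$ also lies on $\Surf_{h'}$ by the character formula above, so re-applying the lemma at $-h\cdot U$ produces the extra fiber element $-h'\cdot(-h\cdot U)=g\cdot U$, completing the classification.
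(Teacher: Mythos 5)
Your outline of the smooth case follows the paper's route: apply Lemma \ref{interiorsumlemma} to obtain the group-law identity $\mu_U(P,Q)=\mu_U(R,S)$ on $\mathcal{E}_U$, note that $\Cu\cap\mathcal{E}_U$ lies over the four points cut on $\Du$ by the Gauss line, and conclude that either $V=U$ or $R+S$ lifts the Serre dual of $p+q$, whence $V=-g.U$ with $g$ non-trivial (since $g=id$ would force $U$ to be a base point, incompatible with $\mathcal{E}_U$ being smooth). The ``if'' direction via the character and parity decomposition of the sections is a routine check. However, you yourself flag that the identification of the $2$-torsion condition $(P+Q)-(P'+Q')=t_k$ on $\mathcal{E}_U$ with the vanishing $\omega_k(U)=0$ is ``the main obstacle''; the paper sidesteps this in the smooth case by arguing directly with the Serre involution on $\Du^{(2)}$, so as written your smooth-case argument still rests on an unproved step.

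The genuine gap is your treatment of the locus where $\mathcal{E}_U$ is singular, which you propose to dispose of ``by a limit argument''. Closedness of $\{(U,V):\phi_{\Surf}(U)=\phi_{\Surf}(V)\}$ only shows that limits of identifications are identifications; it does not exclude identifications supported entirely over the degenerate locus, and these are exactly what the theorem must classify there. Indeed, case 3 of the statement, $V=g.U$, occurs only on the curves $\Surf_h\cap\Surf_{h'}$, which sit inside the locus where $\mathcal{E}_U$ degenerates to a union of four lines; your attempt to recover it ``by iteration'' from case 2 only exhibits elements of the fiber (the easy inclusion) and does not show that nothing else lies in the fiber over that locus. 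The paper spends the bulk of its proof precisely here: it splits the degenerate Gauss lines into three subcases ($\mathcal{E}_U$ a union of two conics, a double conic --- the base-point case --- or four lines), parametrizes the components explicitly, evaluates $\eta_{12},\eta_{13},\eta_{23}$ and $\omega_{45},\omega_{67},\omega_{89}$ on these parametrizations, and solves the resulting proportionality equations to show that only $U=V$ (two conics) or $V=g.U$ with $g$ non-trivial (four lines) can occur. Some substitute for these explicit computations is indispensable; without it the classification is not established.
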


\begin{proof}
Let us consider $U= [P,Q]$ and $V= [R,S]$ two points on $\Surf$, and let us assume that $\phi_{\Surf}(U) = \phi_{\Surf}(V)$. Let $p$, $q$, $r$ and $s$ denote, moreover, the corresponding points on $\Du$, and $[a,b,c] = [\eta_{23}, -\eta_{13}, \eta_{12}]$ the coefficients of the line $l:= \GAU(U) = \GAU(V) \in {\PR^2}^{\vee}$ according to \ref{definitionsbasicletters}.

\noindent Depending on the coefficients, the locus $\mathcal{E}:= \mathcal{E}_{U}$ will be smooth or not. However, up to exchange $a$, $b$, and $c$ we can assume that we are in one of the following cases:
\begin{itemize}
\item[i)] $a$, $b$ and $c$ are all distinct and non-zero. In this case, $\mathcal{E}$ is a smooth elliptic curve.
\item[ii)] $c = 0$, but $b \neq 0 \neq a$ and $a \neq b$. In this case, the locus $\mathcal{E}$ is the union of two irreducible conics in $\PR^3$ which meet in a point not on $\Cu$. 
\item[iii)] $c = 0$ and $b=0$. In this case, $l$ is the bitangent $x$, and $\mathcal{E}$ is a double conic contained in the hyperplane $\{X=0\}$ in $\PR^3$. This case occurs precisely when $U$ and $V$ are base points. (cf. definition \ref{examplebasepts})
\item[iv)] $c = 0$ and $a = b \neq 0$. In this case, the locus $\mathcal{E}$ is the union of four lines, each couple of them lying on a plane and intersecting in a point not belonging to $\Cu$. 
\end{itemize}
We begin with the first case and we assume that $\mathcal{E}$ is a smooth elliptic curve. Then by lemma \ref{interiorsumlemma}, we have that:
\begin{equation} \label{eqmu}
\mu (P,Q) = \mu (R,S) \BEISTRICH
\end{equation}
where $\mu$ is the group law in $\mathcal{E}$, and we assume that $U \neq V$. Up to exchange $R$ and $S$ we can suppose that $R \neq P$ and $S \neq Q$ by the previous identity \ref{eqmu}.

\noindent If $S$ belongs to the $\GrpJ$-orbit of $P$, we can assume that $S = P$, because we can act on the representatives of $U$ and $V$ with the diagonal subgroup $\Delta_{\GrpJ}$, and by (\ref{eqmu}) it follows that $R = Q$, and finally that $U = V$. Thus, we shall assume that $S$ does not belong to the $\GrpJ$-orbit of $P$, and  that the $\GrpJ$-orbits of $R$ and $S$ are disjoint from the $\GrpJ$-orbits of $P$ and $Q$. Thus, the points of the canonical divisor $p+q+r+s$ are such that $p \neq r$, $p \neq s$, $q \neq r$ and $q \neq s$, and the divisor $R+S$ on $\Cu$ is the preimage of the Serre dual of the divisor $p+q$ on $\Du$. Hence, it must exist an element $g \in G$ such that:
\begin{equation*}
V= -g.U
\end{equation*}
The element $g$ is not the identity because otherwise $U$ and $V$ were both base points (see definition \ref{importantsectionsS}), and in such a case we would reach a contradiction by remark \ref{examplebasepts} since $\mathcal{E}$ cannot be smooth in this case. Hence, the theorem is proved in this case.
\newline
\\
In the remaining cases is not possible to apply lemma \ref{interiorsumlemma}, since $\mathcal{E}$ is no longer  smooth. Nevertheless, we can assume without loss of generality that $U = [P,Q]$ and $V = [P,R]$, where $P,Q,R$ are three points on $\Cu$. 

\noindent Suppose we are in the second case. Then $\mathcal{E}$ is defined by the equations:
\begin{align*}
\mathcal{E} &:=  \begin{cases} aX^2 + bY^2   =  0 \\
					X^2 + Y^2 + Z^2 + T^2 = 0 \end{cases} \BEISTRICH
\end{align*}
where:
\begin{align*}
\mathcal{E} &= \mathcal{Q}^{+} \cup \mathcal{Q}^{-}  \\
\mathcal{Q}^{\epsilon} &= \begin{cases} Y =  \epsilon i \sqrt{\frac{b}{a}} X \\
					X^2 + Y^2 + Z^2 + T^2 = 0 \end{cases}
\end{align*}
and $\epsilon$ denotes a sign. We choose the following parametrization $f^{\epsilon}: \PR^1 \longrightarrow \mathcal{Q}^{\epsilon} \subseteq \PR^3$ : 
\begin{align}\label{secondfunctionexpression}
f^{\epsilon}([u,v]) := \left[\frac{uv}{\sqrt{1-\frac{b}{a}}}, \epsilon i \sqrt{\frac{b}{a}}\frac{uv}{\sqrt{1-\frac{b}{a}}}, \frac{i}{2}(u^2 + v^2),\frac{i}{2}(u^2 - v^2)  \right] 
\end{align} 
\begin{align*}
\mathcal{Q}^{+} \cap \mathcal{Q}^{-} = f^{*}([1,0]) = f^{*}([0,1]) \notin \Cu \PUNKT
\end{align*}
The choice of the square roots in definition \ref{secondfunctionexpression} is not important. Furthermore, we notice that the group $\GrpJ$ acts in the following form:
\begin{align*}
a.f^{\epsilon}([u,v])  &= f^{\epsilon}([u,-v]) \\
b.f^{\epsilon}([u,v])  &= f^{-\epsilon}([v,u]) \PUNKT
\end{align*}
Hence, without loss of generality, we can assume that
\begin{align*}
P &:= f^{1}([u,1]) \\
Q &:= f^{1}([v,1]) \\
R &:= f^{\epsilon}([w,1])  \\
\phi_{\Surf}&(U) = \phi_{\Surf}(V) \PUNKT
\end{align*}
Moreover, without loss of generality we can assume that $R$ does not belong to the $\GrpJ$-orbit of $P$. In this setting, we have to prove that $v = w$ and that $\epsilon = 1$. 
First of all, we have that:
\begin{align*}
\eta_{02}(V) &= \eta_{02}([f^{1}([u,1]), f^{1}([w,1])]) = -\frac{1}{4(1-\frac{a}{b})}(u^2 - w^2)(1-u^2w^2) \\
\eta_{12}(V) &= \eta_{12}([f^{1}([u,1]), f^{1}([w,1])]) = -\frac{1}{4} \frac{\frac{a}{b}}{(1-\frac{a}{b})} (u^2 - w^2)(1-u^2w^2) \PUNKT
\end{align*}
In the same way, the following expressions of the sections $\omega_{45}$, $\omega_{67}$ and $\omega_{89}$ hold, up to a constant independent from $u$, $v$ and $\epsilon$:
\begin{equation} \label{omegaexpressionsecondcase}
\begin{split}
\omega_{45}(V) &= \begin{vmatrix} u^2  & \epsilon w^2 \\
							u^4-1 & w^4-1
							 \end{vmatrix} =
 \begin{cases} -(u^2 - w^2)(u^2w^2 + 1) \ \ \ \text{if} \ \ \epsilon = 1 \\
					 	  (u^2 + w^2)(u^2w^2 - 1) \ \ \ \text{if} \ \ \epsilon = -1 \\
			     \end{cases} \\
\omega_{67}(V) &= \begin{vmatrix} u(u^2 +1)  & \epsilon w(w^2+1) \\
						     u(u^2-1) & w(w^2-1)
							 \end{vmatrix} =
 \begin{cases} -2 uw(u^2 - w^2) \ \ \ \text{if} \ \ \epsilon = 1 \\
					 	  -2 uw(u^2w^2-1)\ \ \ \text{if} \ \ \epsilon = -1 \\
			     \end{cases} \\
\omega_{89}(V) &= \begin{vmatrix} u(u^2 -1)  & \epsilon w(w^2-1) \\
						     u(u^2+1) & w(w^2+1)
							 \end{vmatrix} =
 \begin{cases} 2 uw(u^2 - w^2) \ \ \ \text{if} \ \ \epsilon = 1 \\
					 	  -2 uw(u^2w^2-1)\ \ \ \text{if} \ \ \epsilon = -1 \\
			     \end{cases}
\end{split}
\end{equation}
Finally, by applying the previous expressions \ref{omegaexpressionsecondcase} to $U$, we obtain:
\begin{align*}
\phi_{\Surf}(U) = \begin{bmatrix}
\eta_{01}(U) \\
\eta_{02}(U) \\
\eta_{12}(U) \\
\omega_{45}(U) \\
\omega_{67}(U) \\
\omega_{89}(U)
\end{bmatrix} = \begin{bmatrix}
0\\
-\frac{1}{4(1-\frac{a}{b})}(u^2 - v^2)(1-u^2v^2) \\
 -\frac{1}{4} \frac{\frac{a}{b}}{(1-\frac{a}{b})} (u^2 - v^2)(1-u^2v^2) \\
 -(u^2 - v^2)(u^2v^2 + 1) \\
 -2 uv(u^2 - v^2) \\
2 uv(u^2 - v^2)
\end{bmatrix} =  \begin{bmatrix}
0\\
\frac{1}{4(1-\frac{a}{b})}(1+u^2v^2) \\
 \frac{1}{4} \frac{\frac{a}{b}}{(1-\frac{a}{b})}(1-u^2v^2) \\
 u^2v^2 + 1 \\
 2 uv \\
-2 uv
\end{bmatrix} \PUNKT
\end{align*}
If we had that $\epsilon = -1$ , then we would have:
\begin{align*}
\phi_{\Surf}(V) = \begin{bmatrix}
\eta_{01}(V) \\
\eta_{02}(V) \\
\eta_{12}(V) \\
\omega_{45}(V) \\
\omega_{67}(V) \\
\omega_{89}(V)
\end{bmatrix} = \begin{bmatrix}
0\\
-\frac{1}{4(1-\frac{a}{b})}(u^2 - w^2)(1-u^2w^2) \\
 -\frac{1}{4} \frac{\frac{a}{b}}{(1-\frac{a}{b})} (u^2 - w'^2)(1-u^2w^2) \\
(u^2 + w^2)(u^2w^2 - 1) \\
-2 uw(u^2w^2-1) \\
-2 uw(u^2w^2-1)
\end{bmatrix} =  \begin{bmatrix}
0\\
-\frac{1}{4(1-\frac{a}{b})}(u^2 - w^2)  \\
 -\frac{1}{4} \frac{\frac{a}{b}}{(1-\frac{a}{b})} (u^2 - w^2)  \\
u^2 + w^2\\
-2 uw \\
-2 uw
\end{bmatrix} \BEISTRICH
\end{align*}
which would imply that $\phi_{\Surf}(U) \neq \phi_{\Surf}(V)$ since neither $u$ nor $w$ can vanish. Hence, we can conclude that $\epsilon = \epsilon'= 1$. 
In this case, we have, as points on $\PR^5$:
\begin{align*}
\phi_{\Surf}(U) = \begin{bmatrix}
0\\
\frac{1}{4(1-\frac{a}{b})}(1-u^2v^2) \\
 \frac{1}{4} \frac{\frac{a}{b}}{(1-\frac{a}{b})} (1-u^2v^2) \\
 u^2v^2 + 1 \\
 2 uv \\
-2 uv
\end{bmatrix} =  \begin{bmatrix}
0\\
\frac{1}{4(1-\frac{a}{b})}(1-u^2w^2) \\
 \frac{1}{4} \frac{\frac{a}{b}}{(1-\frac{a}{b})} (1-u^2w^2) \\
 u^2w^2 + 1 \\
 2 uw \\
-2 uw
\end{bmatrix} = \phi_{\Surf}(V)  \BEISTRICH
\end{align*}
and it can be easily seen that $v = w$ holds.

\noindent It only remains to consider the fourth case. The locus $\mathcal{E}$ is reducible and it is the union of four lines,
\begin{equation} \label{linesfourthcase}
\begin{split}
\mathcal{E} &= r^{1,1} \cup r^{1,-1} \cup r^{-1,1} \cup r^{-1,-1}   \\
 r^{\gamma,\delta} &= \begin{cases} Y = \gamma i X  \\
					T = \delta i Z \end{cases} \ \ \ \gamma,\delta \in \{+1, -1\}
\end{split} \PUNKT
\end{equation}
We can now easily parametrize these lines with parametrizations $g^{\gamma, \delta}$, where $g^{\gamma, \delta}([u,v]):= [u, \gamma i u, v, \delta i v]$. 
Denoted by $\infty$ the point $[1,0]$ on the projective line, it can be easily seen that $g^{\gamma, \delta}(0)$ and $g^{\gamma, \delta}(\infty)$ does not belong to $\Cu$, and that the group $\GrpJ$ acts on these lines as follows:
\begin{align*}
a.g^{\gamma, \delta} ([u,v]) &= g^{\gamma, \delta} ([-u,v]) \\
b.g^{\gamma, \delta} ([u,v]) &= g^{-\gamma, -\delta} ([u,v])  \PUNKT
\end{align*}
Let us consider now $U := [g^{\gamma, \delta}(u), g^{\gamma', \delta'}(u') ]$ and $V := [g^{\gamma, \delta}(u), g^{\gamma'', \delta''}(u'') ]$. We assume that their  image with respect to the canonical map is the same. By \ref{polynomialexpressioncanonicalmap}, the evaluation at $U$ of the canonical map $\phi_{\Surf}$ can be expressed as follows: 
\begin{align*}
\phi_{\Surf}(U) = \begin{bmatrix}
\eta_{01}(U) \\
\eta_{02}(U) \\
\eta_{12}(U) \\
\omega_{45}(U) \\
\omega_{67}(U) \\
\omega_{89}(U)
\end{bmatrix}&=
\begin{bmatrix}
0 \\
u^2 - u'^2  \\
u'^2 - u^2  \\
- \begin{vmatrix} \gamma u^2 & \gamma' u'^2 \\
							\delta & \delta'
							 \end{vmatrix} \\
\begin{vmatrix} u & u'  \\
							-\gamma\delta u & -\gamma'\delta' u'
							 \end{vmatrix} \\
\begin{vmatrix} \delta i u &  \delta' i u' v \\
							\gamma i u & \gamma' i u' 
							 \end{vmatrix}
\end{bmatrix} = \begin{bmatrix}
0 \\
u^2 - u'^2  \\
u'^2 - u^2  \\
- \begin{vmatrix} \gamma u^2 & \gamma' u'^2 \\
							\delta & \delta'
							 \end{vmatrix} \\
-\gamma \gamma' uu' \begin{vmatrix} \gamma & \gamma'  \\
				\delta & \delta' 
							 \end{vmatrix} \\
uu' \begin{vmatrix} \gamma & \gamma'  \\
				\delta & \delta' 
							 \end{vmatrix}
\end{bmatrix}
\end{align*}
By the hypothesis that $\phi_{\Surf}(U) = \phi_{\Surf}(V)$, it follows that there exists $\lambda \in \C^*$ such that:
\begin{align} \label{canonicalmapequfin}
\begin{cases}
u^2 - u''^2 &= \lambda (u^2 - u'^2) \\
\begin{vmatrix} \gamma u^2 & \gamma'' u''^2 \\
							\delta & \delta''
							 \end{vmatrix} &= \lambda \begin{vmatrix} \gamma u^2 & \gamma' u'^2 \\
							\delta & \delta'
							 \end{vmatrix} \\
 \gamma'' u'' \Delta'' &= \lambda \gamma' u' \Delta' \\
u'' \Delta'' &= \lambda  u' \Delta' \\
\end{cases}
\end{align}
where $\Delta' := \begin{vmatrix} \gamma & \gamma'  \\
				\delta & \delta' 
							 \end{vmatrix}$ and $\Delta'' := \begin{vmatrix} \gamma & \gamma''  \\
				\delta & \delta'' 
							 \end{vmatrix}$.
In consequence of the last two identities in \ref{canonicalmapequfin}, we can easily infer that $\gamma' = \gamma''$. In particular, we see that $\delta' = \delta''$ because $\Delta'$ vanishes if and only if $\Delta''$ does. Thus, $\Delta' = \Delta''$ and the equations \ref{canonicalmapequfin} can be rewritten in the following form:
\begin{align*} 
\begin{cases}
u^2 - u''^2 &= \lambda (u^2 - u'^2) \\
\begin{vmatrix} \gamma u^2 & \gamma' u''^2 \\
							\delta & \delta'
							 \end{vmatrix} &= \lambda \begin{vmatrix} \gamma u^2 & \gamma' u'^2 \\
							\delta & \delta'
							 \end{vmatrix} \\
u''  &= \lambda  u' \PUNKT
\end{cases}
\end{align*}
We finally obtain the following linear system in the variables $u^2, u'^2$:
\begin{align*}
\begin{cases}
\gamma \delta' (1-\lambda)u^2 &+ \lambda \gamma' \delta (1-\lambda) u'^2 = 0 \\
(1-\lambda)u^2 &+ (1-\lambda) \lambda u'^2 = 0 \PUNKT
\end{cases}
\end{align*}
The determinant of this linear system must vanish because $u$ and $u'$ are supposed to be non-zero. Hence, we have that $\delta\delta'\lambda(1-\lambda)^2\Delta = 0$, which leads to two possible cases: if $\lambda = 1$ we can conclude that $U = V$. Otherwise, $\Delta = 0$ and we have $\omega_{67} =  \omega_{89} = 0$. Hence
\begin{align*}
\begin{cases}
u'' &= \lambda u' \\
u^2 &= -\lambda u'^2 \\
u^2 - u''^2 &= \lambda (u^2-u'^2) \BEISTRICH
\end{cases}
\end{align*}
and finally
\begin{align*}
(-\lambda-1) u'^2 = \lambda (-\lambda u'^2 - u'^2) = -\lambda(\lambda +1)u'^2 \PUNKT
\end{align*}
In conclusion, $\lambda = -1$ and $(\gamma'', \delta'') =  \pm(\gamma', \delta')$, and there exists then a nontrivial element $g$ of $\GrpJ$ such that $g.U = V$. This completes the proof of the theorem.
\end{proof}
In \cite{Cesarano2018} we proved that $\phi_{\Surf}$ has actually injective differential. It is therefore an
interesting question, whether the same result could be proved by using the approach used to prove theorem 
\ref{FUNDAMENTALPULLBACK}.

\newcommand{\BIBND}[4]{\bibitem{#1} 
#2, \emph{#3}. 
#4.}
\newcommand{\BIBA}[5]{\bibitem{#1}
#2, \emph{#3}. 
#4, (#5).}
\newcommand{\BIBB}[6]{\bibitem{#1}
#2, \emph{#3}. 
#4, (#5), #6.}
\newpage

\end{document}